\newtheorem{thm}{Theorem}
\newtheorem{lma}[thm]{Lemma} \newtheorem{prop}[thm]{Proposition}
\newtheorem{defn}[thm]{Definition}
\newtheorem{rem}[thm]{Remark}
\def\1{\mathbb{I}}
\def\C{\mathbb{C}}
\DeclareMathOperator{\Cl}{Cl}
\DeclareMathOperator{\dom}{Dom}
\DeclareMathOperator{\End}{End}
\def\half{\tfrac{1}{2}}
\def\S{\mathbb{S}}
\def\tilde{\widetilde}
\title[Riemannian submersions, factorization of Dirac operators]{Riemannian submersions and factorization of Dirac operators}
\author{Jens Kaad and Walter D. van Suijlekom}
\address{Department of Mathematics and Computer Science, Syddansk Universitet, Campusvej 55, 5230, Odense M, Denmark}
\email{jenskaad@hotmail.com}
\address{Institute for Mathematics, Astrophysics and Particle Physics, Radboud University Nijmegen, Heyendaalseweg 135, 6525 AJ Nijmegen, The Netherlands}
\email{waltervs@math.ru.nl}
\date{\today}
\subjclass[2010]{19K35; 46L87, 53C27}
\keywords{Unbounded $KK$-theory, Riemannian submersions, Dirac operators, Spin$^c$-structures, Wrong way functoriality}
\begin{document}
\maketitle

\begin{abstract}
We establish the factorization of Dirac operators on Riemannian submersions of compact spin$^c$ manifolds in unbounded $KK$-theory. More precisely, we show that the Dirac operator on the total space of such a submersion is unitarily equivalent to the tensor sum of a family of Dirac operators with the Dirac operator on the base space, up to an explicit bounded curvature term. Thus, the latter is an obstruction to having a factorization in unbounded $KK$-theory. We show that our tensor sum represents the bounded $KK$-product of the corresponding $KK$-cycles and connect to the early work of Connes and Skandalis. 

\end{abstract}

\section{Introduction}
\label{sect:intro}
Noncommutative geometry \cite{C94} is a vast generalization of differential geometry to the quantum world. However, many of its successful applications are in ordinary, commutative differential geometry. For instance, it turned out that Kasparov's bivariant $K$-theory provided the right context for index theory which naturally extends to foliations \cite{C82}. Here, a central role is played by the shriek or wrong-way map $f!$ associated to any ($K$-oriented) smooth map $f:X \to Y$ between smooth manifolds. It is an element in the bivariant $K$-theory $KK(C(X),C(Y))$ of Kasparov \cite{Kas} and is defined using the principal symbol of a suitably defined pseudodifferential operator of order $0$. We refer to \cite{C82,CS84} (and \cite{HS87} for the general case of maps between foliations) for full details. For the special case that $Y$ is a point the shriek map is the fundamental class $[X]$ of the manifold in the $K$-homology group $KK(C(X),\C)$. The wrong-way functoriality of the shriek map was stated as a problem in \cite{C82} and proven in \cite{CS84}. It says that for two maps $f : X \to Y$ and $g:Y \to Z$ we have
$$
(g \circ f)! = f! \hot_{C(Y)} g!
$$
where $\hot_{C(Y)}$ denotes the internal Kasparov product in $KK$-theory.

Now, for the special case that $Z$ consist of a single point, this implies that the fundamental class of $X$ factorizes in $KK$-theory as follows:
\begin{equation}
\label{eq:fact}
[X] = f! \hot_{C(Y)} [Y].
\end{equation}
Moreover, if $f:X \to Y$ is a submersion, the shriek map $\pi!$ can equivalently be described by a family of pseudodifferential operators of order zero acting on the fibers of $f$ \cite[Proposition 2.9]{CS84}. A natural first question that arises is whether (and under which conditions) one can explicitly find unbounded $KK$-cycles (as introduced in the early 1980s by Connes and Baaj--Julg in \cite{BJ83}) that represent the respective classes $[X], [Y]$ and $f!$. If $X$ and $Y$ are spin$^c$ manifolds, it is clear \cite{C85} that the Dirac operators $D_X$ and $D_Y$ represent the corresponding fundamental classes. Moreover, if $f: X \to Y$ is a Riemannian submersion, one expects that there is a family of Dirac operators $\{ S_y \}$ on the fibers $f^{-1}(\{y\})$ (for all $y \in Y$) that gives an unbounded representative of the class $f!$ in $KK(C(X),C(Y))$. The immediate next question is then whether we can write the factorization formula \eqref{eq:fact} as a tensor sum of these unbounded $KK$-cycles, that is to say, whether in some sense
\begin{equation}
\label{eq:tensor-sum}
D_X = S \otimes 1 + 1 \otimes_\nabla D_Y,
\end{equation}
with $\nabla $ a suitable connection on the bundle of vertical spinors. 

Even though the above are natural questions, and their affirmative answers appear to be folklore to practitioners of (unbounded) $KK$-theory, we have not found a detailed written account on it in the literature. Of course, the bounded $KK$-cycles that enter \cite{CS84} are very much differential in nature and essentially already dictate the form of the corresponding unbounded $KK$-cycles. The work of Bismut \cite{Bis86} (see also \cite[Chapter 10]{BGV92} and \cite[Chapter 4]{GLP99}) comes very close, at least in spirit, but does not connect to $KK$-theory (even though that would provide the right context for the Atiyah--Singer index theorem for families). Also, the mere existence of the unbounded $KK$-cycles and the validity of the tensor sum as an unbounded representative of the internal Kasparov product would follow from \cite{MR15}. There, the authors work with the unbounded form of the internal $KK$-product that was the topic of \cite{Mes09b, KL13} and use a theorem by Kucerovsky \cite{Kuc97} to check if an unbounded $KK$-cycle is a representative of the internal product of (the bounded classes of) two other unbounded $KK$-cycles. Without questioning the power of their approach, the geometric context sketched above makes it most natural to explicitly construct the unbounded $KK$-cycles in terms of (families of) Dirac operators and establish for them an explicit tensor sum decomposition as in Equation \ref{eq:tensor-sum}. Moreover, as we will see below (Theorem \ref{thm:fact}) there is an obstruction to having an exact factorization of unbounded $KK$-cycles given by a (bounded) curvature form. In fact, this is the key point of working with unbounded $KK$-cycles instead of merely their homotopy classes in bounded $KK$-theory: one captures the metric aspect of the geometry which is encoded by an unbounded operator. From this point of view it is thus not surprising that curvature enters in the (unbounded product) formula for submersions.

\medskip

In this paper, we will present in full detail the construction of an unbounded representative for $\pi!$ in the case of a Riemannian submersion $\pi:M \to B$ of spin$^c$ manifolds $M$ and $B$. It is given by a family of Dirac operators on the fibers of $\pi$ and is defined in terms of a vertical Clifford connection acting on a vertical Clifford module.

After we have shown that the tensor product of the vertical Clifford module with the Hilbert space of $L^2$-spinors on $B$ is unitarily equivalent to the Hilbert space of $L^2$-spinors on $M$, we state our main result in Theorem \ref{thm:fact}. We establish that the tensor sum $S \otimes 1 + 1 \otimes_\nabla D_B$ is unitarily equivalent to $D_M$, up to an explicit (bounded) curvature term. As already mentioned, such a curvature term cannot be obtained by analyzing the bounded $KK$-product, but only arises as an obstruction to having an exact factorization of unbounded $KK$-cycles.

We exemplify our results by homogeneous spaces for simply-connected compact semisimple Lie groups. Note that this class and the corresponding factorization in $KK$-theory is also being studied in \cite{CM16}. As a special case in this class we obtain the factorization in $KK$-theory of the Dirac operator on the total space of the Hopf fibration which was previously obtained in \cite{BMS13}. We also explain the term  $-\frac12$ that appeared in the tensor sum in {\it loc.~cit.~}as coming from the curvature of the Hopf fibration. Finally, our construction generalizes the projectability results on circle and torus principal bundles of \cite{DS10,DSZ13,DZ13} and the $KK$-factorization results of torus principal bundles of \cite{FR15}.

\subsection*{Acknowledgements}

We would like to thank Alain Connes and Nigel Higson for useful suggestions. We are grateful to Simon Brain, Bram Mesland and Adam Rennie for fruitful discussions. The first author also wants to thank Matthias Lesch for a sequence of inspiring conversations back in 2012 concerning the possibility of applying unbounded $KK$-theory techniques in the context of fiber bundles.

\section{Riemannian submersions}
We start by giving a brief overview of Riemannian submersions, referring to \cite[Chapter III.D]{GHL87} and \cite[Chapter 10.1]{BGV92} for more details. 

Let $M$ and $B$ be compact Riemannian manifolds without boundaries and let $\pi:M \to B$ be a smooth and surjective map. We let $\sX(M)$ and $\sX(B)$ denote the vector fields on $M$ and $B$ and the hermitian forms on the vector fields coming from the Riemannian metrics are denoted by $\inn{\cd,\cd}_M$ and $\inn{\cd,\cd}_B$. The forms on $M$ and $B$ are denoted by $\Om(M) = \op_{j = 1}^{\dim(M)} \Om^j(M)$ and $\Om(B) = \op_{j = 1}^{\dim(B)} \Om^j(B)$, respectively.

We obtain a $C^\infty(M)$-module homomorphism:
\[
\begin{split}
& d \pi : \sX(M) \to \sX(B) \ot_{C^\infty(B)} C^\infty(M) \\
& d\pi(X)(f) := X(f \ci \pi) \, , \, \, f \in C^\infty(B)
\end{split}
\]
and we equip the $C^\infty(M)$-module $\sX(B) \ot_{C^\infty(B)} C^\infty(M)$ with the hermitian form
\[
\begin{split}
& \inn{\cd , \cd} : \sX(B) \ot_{C^\infty(B)} C^\infty(M) \ti \sX(B) \ot_{C^\infty(B)} C^\infty(M)
\to C^\infty(M) \\
& \inn{ Y \ot f, Z \ot g} := \ov f \cd ( \inn{Y,Z}_B \ci \pi ) \cd g
\end{split}
\]

We say that $\pi : M \to B$ is a {\em Riemannian submersion} when $d \pi$ is surjective and
\[
d \pi : (\ker d \pi)^\perp \to \sX(B) \ot_{C^\infty(B)} C^\infty(M)
\]
is an isometric isomorphism, where 
\[
(\ker d\pi)^\perp := \big\{ X \in \sX(M) \mid \inn{X,Y}_M = 0 \, , \, \, \forall Y \in \ker(d\pi) \big\}
\]
is the orthogonal complement of $\ker d\pi$.

A vector field $X$ on $M$ is called {\em vertical} if $d \pi(X) = 0$, and {\em horizontal} if $X \in (\ker d \pi)^\perp$. We also write $\sX_V(M):=\ker d \pi$ and $\sX_H(M):= (\ker d \pi)^\perp$ so that we have a direct sum decomposition
\[
\sX(M) \cong \sX_V(M) \oplus \sX_H(M).
\]
Moreover, $\sX_H(M) \cong \sX(B) \ot_{C^\infty(B)} C^\infty(M)$, isometrically. We denote by $P = 1 - (d \pi)^* (d \pi) : \sX(M) \to \sX(M)$ the orthogonal projection onto $\sX_V(M)$, where $(d \pi)^*$ is the adjoint of $d \pi$ with respect to the hermitian forms on $\sX(M)$ and $\sX(B) \ot_{C^\infty(B)} C^\infty(M)$. 

If $Y$ is a vector field on $B$, the unique vector field $Y_H$ on $M$ such that $d \pi (Y_H) = Y \ot 1$ is called the {\em horizontal lift} of $Y$, thus $Y_H := (d\pi)^*(Y \ot 1)$.

In what follows, we will assume that $\pi:M \to B$ is a Riemannian submersion.

We remark that the Lie-bracket $[\cd,\cd]$ of vector fields on $M$ restricts to a Lie-bracket on the vertical vector fields $\sX_V(M)$ but that the same need not be true for the horizontal vector fields. We record the following:
%

\begin{lma}
\label{lma:hor-vert}
If $X$ is a vertical vector field on $M$ and $Y_H$ a horizontal lift of a vector field $Y$ on $B$, then $[X,Y_H]$ is vertical.
\end{lma}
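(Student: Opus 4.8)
The plan is to show that $[X, Y_H]$ is annihilated by $d\pi$, since by definition that is exactly what it means for a vector field to be vertical. The key tool is the naturality property of $d\pi$ on horizontal lifts: for a vector field $Y$ on $B$ we have $d\pi(Y_H) = Y \otimes 1$, and more generally $d\pi$ intertwines the relevant Lie-bracket structures in a controlled way.

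First I would recall that for any $X \in \sX(M)$ and any $f \in C^\infty(B)$ the definition gives $d\pi(X)(f) = X(f \circ \pi)$. For a vertical field $X$ this means $X(f \circ \pi) = 0$ for all $f \in C^\infty(B)$, i.e.\ $X$ differentiates pulled-back functions to zero. Next I would compute $d\pi([X, Y_H])$ by evaluating it on an arbitrary $f \in C^\infty(B)$:
\[
d\pi([X, Y_H])(f) = [X, Y_H](f \circ \pi) = X\big(Y_H(f \circ \pi)\big) - Y_H\big(X(f \circ \pi)\big).
\]
The second term vanishes immediately because $X$ is vertical, so $X(f \circ \pi) = 0$ and hence $Y_H$ applied to it is $0$. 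For the first term I would use that $d\pi(Y_H) = Y \otimes 1$, which unpacks to $Y_H(f \circ \pi) = (Yf) \circ \pi$; this is again a pulled-back function from $B$, so applying the vertical field $X$ to it gives $X\big((Yf)\circ \pi\big) = 0$. Therefore $d\pi([X,Y_H])(f) = 0$ for all $f \in C^\infty(B)$, which forces $d\pi([X, Y_H]) = 0$, i.e.\ $[X, Y_H] \in \sX_V(M)$.

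I expect the only subtle point to be the bookkeeping around the identification $\sX_H(M) \cong \sX(B) \otimes_{C^\infty(B)} C^\infty(M)$ and the meaning of $d\pi(Y_H) = Y \otimes 1$ as an equality of module elements rather than of fields: one has to be slightly careful that $Y_H(f \circ \pi)$ really equals $(Yf)\circ\pi$ as a function on $M$ and not merely up to something in the kernel of the module map. But since the module map $\sX(B)\otimes_{C^\infty(B)} C^\infty(M) \to C^\infty(M)$ given by evaluation $Y \otimes g \mapsto g \cdot (Yf)$ is well-defined, tracing through the definition of $d\pi$ makes this precise, and no genuine difficulty arises. No analytic or global input is needed; the statement is purely a local/algebraic consequence of the derivation property of the Lie bracket and the fact that vertical fields kill $\pi$-pullbacks.
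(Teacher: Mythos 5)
Your proof is correct and follows essentially the same route as the paper's: compute $d\pi([X,Y_H])(f) = [X,Y_H](f\circ\pi)$, observe that the term $Y_H(X(f\circ\pi))$ vanishes because $X$ is vertical, and then use $Y_H(f\circ\pi) = (Yf)\circ\pi$ together with verticality of $X$ again to kill the remaining term. The only difference is cosmetic: the paper drops the $Y_H(X(f\circ\pi))$ term silently, while you spell it out.
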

\proof
This follows since
\[
\begin{split}
d \pi( [X,Y_H])(f) & = [X,Y_H](f \ci \pi) = X\big( Y_H(f \ci \pi) \big) \\
& = X\big( (Y \ot 1)(f) \big) = X\big( Y(f) \ci \pi ) = 0
\end{split}
\]
for all $f \in C^\infty(B)$.
\endproof

Recall that on $M$ and $B$ there are Levi-Civita connections $\nabla^M$ and $\nabla^B$, respectively. They are metric and torsionfree, and as a result they satisfy Koszul's formula:
\begin{align}
\label{eq:koszul}
2 \inn{\nabla^M_X(Y),Z}_M & = \inn{[X,Y],Z}_M - \inn{[Y,Z],X}_M + \inn{[Z,X],Y}_M  \\
& \qquad + X(\inn{Y,Z}_M) + Y (\inn{Z,X}_M) - Z( \inn{X,Y}_M),\nonumber
\end{align}
for all \emph{real} vector fields $X,Y,Z \in \sX(M)$ and a similar formula holds for $\nabla^B$. 

\begin{lma}
\label{lma:levi-civitas}
Let $\nabla^M$ and $\nabla^B$ be the Levi-Civita connections on $M$ and $B$, respectively. If $X_H$ and $Y_H$ are horizontal lifts of $X$ and $Y$ in $\sX(B)$, respectively, then
\[
\nabla_{X_H}^M (Y_H) = \left( \nabla_X^B(Y) \right)_H + \half P [X_H,Y_H].
\]
\end{lma}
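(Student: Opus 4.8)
The plan is to verify the identity by pairing both sides against an arbitrary vector field $Z \in \sX(M)$, split into its vertical and horizontal components, and use Koszul's formula \eqref{eq:koszul} together with the fact that $\pi$ is a Riemannian submersion. First I would observe that $\nabla^M_{X_H}(Y_H)$ decomposes as $P\nabla^M_{X_H}(Y_H) + (1-P)\nabla^M_{X_H}(Y_H)$, so it suffices to show two things: that the horizontal part equals $\big(\nabla^B_X(Y)\big)_H$, and that the vertical part equals $\tfrac12 P[X_H,Y_H]$. Note that the last claim is almost immediate from torsion-freeness: since $\nabla^M$ is torsion-free, $\nabla^M_{X_H}(Y_H) - \nabla^M_{Y_H}(X_H) = [X_H,Y_H]$, and since $\nabla^M_{X_H}(Y_H) + \nabla^M_{Y_H}(X_H)$ pairs to something horizontal against vertical test fields (by the computation below), applying $P$ gives $P\nabla^M_{X_H}(Y_H) - P\nabla^M_{Y_H}(X_H) = P[X_H,Y_H]$ while $P\nabla^M_{X_H}(Y_H) + P\nabla^M_{Y_H}(X_H) = 0$; adding yields the vertical part.

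For the horizontal part I would pair $\nabla^M_{X_H}(Y_H)$ against $Z_H$, the horizontal lift of an arbitrary $Z \in \sX(B)$, and expand $2\inn{\nabla^M_{X_H}(Y_H),Z_H}_M$ via \eqref{eq:koszul}. The three bracket terms $\inn{[X_H,Y_H],Z_H}_M$, $\inn{[Y_H,Z_H],X_H}_M$, $\inn{[Z_H,X_H],Y_H}_M$ each need to be related to the corresponding brackets on $B$; here the key input is that $d\pi[X_H,Y_H] = [X,Y]\otimes 1$ — which follows exactly as in the proof of Lemma \ref{lma:hor-vert} by testing against $f \in C^\infty(B)$ — so the horizontal component of $[X_H,Y_H]$ is $([X,Y])_H$, and since $Z_H$ is horizontal, $\inn{[X_H,Y_H],Z_H}_M = \inn{([X,Y])_H, Z_H}_M = \inn{[X,Y],Z}_B \ci \pi$ using the isometry property of $d\pi$ on horizontal fields. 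The three derivative terms $X_H(\inn{Y_H,Z_H}_M)$ etc.\ I would rewrite using $\inn{Y_H,Z_H}_M = \inn{Y,Z}_B\ci\pi$ and $X_H(g\ci\pi) = (X g)\ci\pi$ for $g\in C^\infty(B)$, so $X_H(\inn{Y_H,Z_H}_M) = \big(X(\inn{Y,Z}_B)\big)\ci\pi$. Assembling, the right-hand side of \eqref{eq:koszul} equals $\big(2\inn{\nabla^B_X(Y),Z}_B\big)\ci\pi = 2\inn{(\nabla^B_X Y)_H, Z_H}_M$; since this holds for all $Z$ and horizontal lifts span $\sX_H(M)$ over $C^\infty(M)$, we get $(1-P)\nabla^M_{X_H}(Y_H) = (\nabla^B_X Y)_H$.

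The one genuinely delicate point — the main obstacle — is handling the vertical test fields carefully: I must confirm that $\inn{\nabla^M_{X_H}(Y_H) + \nabla^M_{Y_H}(X_H), W}_M = 0$ for every vertical $W$, equivalently that the symmetric part has no vertical component. Using Koszul's formula to compute $2\inn{\nabla^M_{X_H}(Y_H),W}_M$ with $W$ vertical: the derivative term $-W(\inn{X_H,Y_H}_M)$ vanishes because $\inn{X_H,Y_H}_M = \inn{X,Y}_B\ci\pi$ is constant along the fibers and $W$ is vertical, while $X_H(\inn{Y_H,W}_M)$ and $Y_H(\inn{W,X_H}_M)$ need to be combined with the bracket terms $-\inn{[Y_H,W],X_H}_M$ and $\inn{[W,X_H],Y_H}_M$; by Lemma \ref{lma:hor-vert} these Lie brackets are vertical, so they pair to zero against the horizontal $X_H, Y_H$, and one is left with $X_H(\inn{Y_H,W}_M) + Y_H(\inn{X_H,W}_M) + \inn{[X_H,Y_H],W}_M$, which after symmetrizing in $X_H \leftrightarrow Y_H$ must be shown to cancel — this is the computation requiring care, but it reduces to the torsion-free identity once one notes the two derivative terms are symmetric and the bracket term is antisymmetric in $X_H \leftrightarrow Y_H$. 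Having established both components, the lemma follows by adding them, and I would remark that it suffices to treat real vector fields since Koszul's formula is stated for those and both sides are $C^\infty(M)$-linear (sesquilinear) so extend to the complexification.
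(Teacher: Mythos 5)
Your proof is correct and follows essentially the same route as the paper: expand via Koszul's formula \eqref{eq:koszul}, split the answer into vertical and horizontal parts, and invoke Lemma \ref{lma:hor-vert}, the orthogonality of $\sX_V(M)$ and $\sX_H(M)$, and the fact that $\inn{X_H,Y_H}_M = \inn{X,Y}_B \ci \pi$ is constant along the fibers. The horizontal computation is fine.

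However, your treatment of the vertical part is a detour. You first use torsion-freeness of $\nabla^M$ to split $\nabla^M_{X_H}(Y_H)$ into symmetric and antisymmetric parts, then argue that the symmetric part is horizontal so that the antisymmetric part $\tfrac12[X_H,Y_H]$ carries all the vertical content. This is logically sound but unnecessary: pairing directly against a vertical $W$ via Koszul gives
\[
2\inn{\nabla^M_{X_H}(Y_H),W}_M = \inn{[X_H,Y_H],W}_M
\]
in one line, since every other term on the right-hand side of \eqref{eq:koszul} is zero. That already identifies $P\nabla^M_{X_H}(Y_H) = \tfrac12 P[X_H,Y_H]$, which is what the paper does.

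There is also a small but genuine imprecision in your verification that the symmetric part has no vertical component. You correctly discard the bracket terms $\inn{[Y_H,W],X_H}_M$ and $\inn{[W,X_H],Y_H}_M$ using Lemma \ref{lma:hor-vert} and orthogonality, then say you are ``left with'' $X_H(\inn{Y_H,W}_M) + Y_H(\inn{X_H,W}_M) + \inn{[X_H,Y_H],W}_M$ and claim that symmetrizing in $X_H \leftrightarrow Y_H$ makes it cancel. It does not: symmetrization kills only the bracket term, while the two derivative terms are already symmetric and would survive. What actually makes them vanish is the same orthogonality you used for the bracket terms --- $\inn{Y_H,W}_M$ and $\inn{X_H,W}_M$ are identically zero, so their derivatives along $X_H$ and $Y_H$ are zero outright. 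Once you make this observation explicit, the symmetrization argument becomes superfluous, and you will notice that the vertical computation collapses to the one-line identity above, removing the need for the torsion-free detour altogether.

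Finally, your parenthetical remark that it suffices to treat real vector fields because both sides extend by $C^\infty(M)$-linearity should be phrased a bit more carefully: Koszul's formula is only valid for real vector fields, and one extends the resulting identity by $\R$-linearity and the $\C$-linearity/conjugate-linearity of the relevant structures, which is what the paper's ``without loss of generality we may assume that $X$ and $Y$ are real'' stands for.
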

\proof
Without loss of generality we may assume that $X$ and $Y$ are real vector fields on $B$.

Using Koszul's formula for the Levi-Civita connections, we determine both the vertical and horizontal part of the left-hand-side. First, for a real vertical vector field $Z$ we have
\begin{align*}
& 2 \inn{ \nabla_{X_H}^M(Y_H),Z}_M \\ 
&\q =  \inn{ [X_H,Y_H],Z }_M - \inn{[Y_H,Z],X_H}_M + \inn{[Z,X_H],Y_H}_M \\
& \qq + X_H ( \inn{Y_H,Z}_M) + Y_H( \inn{Z,X_H}_M) - Z( \inn{X_H,Y_H}_M)\\
& \q = \inn{[X_H,Y_H],Z}_M.
\end{align*}
We have used Lemma \ref{lma:hor-vert}, the fact that horizontal and vertical parts are orthogonal, and that $\inn{X_H,Y_H}_M = \inn{X,Y}_B \ci \pi$ so that a vertical $Z$ vanishes on it.

Next, suppose $Z_H$ is a horizontal lift of a real vector field $Z$ on $B$. Then we may use that for any horizontal lifts $X_H,Y_H$,
\[
Z_H( \inn{X_H,Y_H}_M ) = Z ( \inn{X,Y}_B ) \ci \pi.
\]
Together with the fact that $[X_H,Y_H] - [X,Y]_H$ is vertical, this yields
\[
\inn{\nabla_{X_H}^M(Y_H),Z_H}_M = \inn{ \nabla_X^B(Y),Z } \ci \pi = \inn{(\nabla_X^B(Y))_H,Z_H}_M .\qedhere
\]
\endproof
%

We will now define metric connections on the vertical and horizontal vector fields. Using the Levi-Civita connection on $M$ we can define a metric connection $\nabla^{V}$ on $\sX_V(M)$ by
\[
\nabla^{V}_X := P \circ \nabla^M _X \circ i : \sX_V(M) \to  \sX_V(M) \q X \in \sX(M)
\] 
in terms of the orthogonal projection $P : \sX(M) \to \sX(M)$ and the inclusion $i : \sX_V(M) \to \sX(M)$. 

On the horizontal vector fields we define a metric connection using the pullback connection on $\sX(B) \ot_{C^\infty(B)} C^\infty(M) \cong \sX_H(M)$. The pullback connection $\pi^* \nabla^B$ is defined by
\begin{equation}\label{eq:pullconn}
(\pi^* \nabla^B)_X (Y \otimes f) = Y \ot X(f) + \wit{\nabla^B}_{d\pi(X)} (Y) \cd f
\end{equation}
for all $X\in \sX(M)$, $Y \in \sX(B)$ and $f \in C^\infty(M)$, where
\[
\wit{\nabla^B}_{(Z \ot g)} (Y) := \Na^B_Z(Y) \ot g \in \sX(B) \otimes_{C^\infty(B)} C^\infty(M)
\]
for all $Z,Y \in \sX(B)$, $g \in C^\infty(M)$. The pullback connection $\pi^* \Na^B$ is a metric connection on $\sX(B) \ot_{C^\infty(B)} C^\infty(M)$. We then define the metric connection $\Na^H$ on $\sX_H(M)$ by
\[
\Na^H_X := (d \pi)^* (\pi^* \Na^B)_X (d \pi) : \sX_H(M) \to \sX_H(M) \q X \in \sX(M).
\]
%

Note that this implies that the following holds
\begin{equation}\label{eq:hor}
\Na^H_X (Y_H) = (d \pi)^* \big( \wit{\nabla^B}_{d\pi(X)} (Y) \big)
\end{equation}
for all $Y \in \sX(B)$ and all $X \in \sX(M)$.


On the direct sum $\sX_V(M) \oplus \sX_H(M)$ we may combine the above two connections $\nabla^{V}$ and $\pi^* \nabla^B$ to define the {\em direct sum connection} on $\sX(M)$ as
\[
\nabla^\oplus = \nabla^{V} \oplus \nabla^H.
\]
This connection is metric, but might have torsion in general. In \cite{Bis86} Bismut ({\em cf.} \cite[Section 10.1]{BGV92} compared the direct sum connection to the Levi-Civita connection on $M$, finding that the difference can be expressed in terms of a three-tensor $\omega \in \Omega^1(M) \ot_{C^\infty(M)} \Om^2(M)$ defined as follows:
\begin{defn}\label{defn:tensors}
We introduce:
\begin{enumerate}
\item The {\em second fundamental form} 
\[
S \in \Om^1(M) \otimes_{C^\infty(M)} \Om^1(M) \otimes_{C^\infty(M)} \Om^1(M)
\]
defined by
\[
S(X,Y,Z) := \binn{\nabla_{(1-P)Z}^V (PX) - [ (1-P)Z, PX],PY}_M
\]
for real vector fields $X,Y,Z \in \sX(M)$.
\item The {\em curvature of the fiber bundle $\pi : M \to B$} is the element $\Om \in \Om^2(M) \ot_{C^\infty(M)} \Om^1(M)$ given by 
\[
\Omega(X,Y,Z) := -\binn{[ (1-P)X,(1-P)Y], PZ}_M
\]
for real vector fields $X,Y,Z \in \sX(M)$.
\item The tensor $\omega \in \Omega^1(M) \ot_{C^\infty(M)} \Om^2(M)$ defined by
\begin{multline*}
\omega(X)(Y,Z) = S( X,Z,Y) -S(X, Y, Z) \\
+ \half \Omega(X,Z, Y) - \half \Omega ( X,Y,Z) 
+ \half \Omega(Y,Z,X)
\end{multline*}
for $X,Y,Z \in \sX(M)$. 
\end{enumerate}
\end{defn}

We provide a more tangible formula for the second fundamental form:


\begin{prop}
\label{prop:expression-tensor}
We have that
\[
2 S(X,Y,Z) = Z (\inn{X,Y}_M) - \inn{[Z,X],Y}_M - \inn{[Z,Y],X}_M
\]
for all real vector fields $X,Y \in \sX_V(M)$ and $Z \in \sX_H(M)$. In particular, we have the symmetry $S(X,Y,Z) = S(Y,X,Z)$ for all $X,Y,Z \in \sX(M)$.
\end{prop}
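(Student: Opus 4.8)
The plan is to unwind the definition of $S$ in the vertical/horizontal case, reduce it to a single covariant derivative, and then invoke Koszul's formula \eqref{eq:koszul}. First I would note that since $P$ and $1-P$ are idempotent, the defining formula gives $S(X,Y,Z)=S(PX,PY,(1-P)Z)$ for \emph{all} $X,Y,Z\in\sX(M)$, so it suffices to treat real $X,Y\in\sX_V(M)$ and $Z\in\sX_H(M)$. In that case $PX=X$, $PY=Y$, $(1-P)Z=Z$, so the definition collapses to
\[
S(X,Y,Z)=\binn{\nabla^V_Z(X)-[Z,X],Y}_M .
\]
Because $\nabla^V_Z=P\circ\nabla^M_Z\circ i$, because $P$ is the orthogonal (hence self-adjoint) projection onto $\sX_V(M)$, and because $Y$ is vertical, one has $\binn{\nabla^V_Z(X),Y}_M=\binn{P\nabla^M_Z(X),Y}_M=\binn{\nabla^M_Z(X),Y}_M$. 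Now using that $\nabla^M$ is torsion-free, $\nabla^M_Z(X)-[Z,X]=\nabla^M_X(Z)$, so the whole expression becomes $S(X,Y,Z)=\binn{\nabla^M_X(Z),Y}_M$.

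Next I would apply Koszul's formula to $2\binn{\nabla^M_X(Z),Y}_M$ (i.e.\ \eqref{eq:koszul} with the arguments $X$, $Z$, $Y$ in that order). The point is that all but three of the six terms vanish: $\inn{Z,Y}_M=\inn{X,Z}_M=0$ since horizontal and vertical fields are orthogonal, and $\inn{[X,Y],Z}_M=0$ since the Lie bracket of the vertical fields $X,Y$ is again vertical (as recorded just before Lemma \ref{lma:hor-vert}) while $Z$ is horizontal. What remains is exactly
\[
2S(X,Y,Z)=-\inn{[Z,X],Y}_M-\inn{[Z,Y],X}_M+Z(\inn{X,Y}_M),
\]
using $[X,Z]=-[Z,X]$ and the symmetry $\inn{X,Y}_M=\inn{Y,X}_M$ of the Riemannian metric on real fields. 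This is the asserted formula.

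For the symmetry statement over arbitrary $X,Y,Z\in\sX(M)$, I would combine the two observations above: $S(X,Y,Z)=S(PX,PY,(1-P)Z)$, to which the formula just proved applies since $PX,PY$ are vertical and $(1-P)Z$ is horizontal; and the right-hand side of that formula is manifestly invariant under interchanging the two vertical arguments. Hence $S(X,Y,Z)=S(PX,PY,(1-P)Z)=S(PY,PX,(1-P)Z)=S(Y,X,Z)$.

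I do not expect a genuine obstacle here; this is essentially a bookkeeping computation. The only points requiring care are (i) correctly identifying which of the six Koszul terms vanish — this rests on the orthogonality of $\sX_V(M)$ and $\sX_H(M)$ and on verticality of $[X,Y]$ — and (ii) moving cleanly between $\nabla^V$, $\nabla^M$ and the Lie bracket using self-adjointness of $P$ together with the torsion-free identity $\nabla^M_Z(X)-[Z,X]=\nabla^M_X(Z)$.
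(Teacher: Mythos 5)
Your proof is correct and matches the paper's in all essentials: both reduce to $\nabla^M$ via self-adjointness of $P$, invoke Koszul's formula, and kill three of the six terms using horizontal--vertical orthogonality and verticality of $[X,Y]$. The only cosmetic difference is that you absorb the $-[Z,X]$ term by torsion-freeness \emph{before} applying Koszul (to $\nabla^M_X Z$), whereas the paper applies Koszul directly to $\nabla^V_Z X$ and subtracts $2\inn{[Z,X],Y}_M$ afterward.
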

\proof
This follows from Koszul's formula for the Levi-Civita connection $\nabla^M$, after restriction to the vertical vector fields using the orthogonal projection $P$. Indeed, for real vertical vector fields $X,Y$ and a real horizontal vector field $Z$ we have that
\begin{align*}
2 \inn{\nabla_Z^{V} X ,Y}_M = \inn{[Z,X],Y}_M + \inn{[Y,Z],X}_M + Z( \inn{X,Y}_M)
\end{align*}
from which the result follows at once. 
\endproof

\begin{prop}[Bismut \cite{Bis86}]\label{p:omedir}
The Levi-Civita connection $\nabla^M$ is related to the direct sum connection $\nabla^\oplus$ by the following formula 
\[
\inn{\nabla_X^M Y,Z}_M = \inn{\nabla_X^\oplus Y,Z}_M + \omega(X)(Y,Z).
\]
\end{prop}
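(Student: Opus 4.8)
The plan is to verify the identity $\inn{\nabla_X^M Y,Z}_M = \inn{\nabla_X^\oplus Y,Z}_M + \omega(X)(Y,Z)$ by expanding both sides using Koszul's formula and comparing, which reduces to a bilinear bookkeeping exercise once one decomposes $X, Y, Z$ into their vertical and horizontal parts. Since both sides are tensorial in $X$ (the connections differ by a zeroth-order term) and $\C$-linear in $Y$, $Z$, it suffices to take $X, Y, Z$ to be \emph{real} vector fields that are each either vertical or horizontal; moreover, replacing a horizontal field by a horizontal lift of a field on $B$ loses no generality after localizing. This gives $2^3 = 8$ cases to check, but they collapse: $\omega(X)(Y,Z)$ is built from $S$ and $\Omega$, and by Definition \ref{defn:tensors} together with Proposition \ref{prop:expression-tensor}, $S(X,Y,Z)$ vanishes unless its first two arguments are vertical and the last is horizontal, while $\Omega(X,Y,Z)$ vanishes unless its first two arguments are horizontal and the last is vertical.

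First I would record that the two connections $\nabla^M$ and $\nabla^\oplus$ agree on the ``pure'' combinations: if $Y$ is vertical then $\nabla_X^\oplus Y = \nabla_X^V Y = P\nabla_X^M Y$, so the identity holds with $\omega(X)(Y,\cdot) = 0$ against vertical test fields $Z$, and the remaining content for vertical $Y$ is the horizontal component, i.e.\ with $Z = Z_H$ a horizontal lift; similarly if $Y$ is horizontal then $\nabla^\oplus_X Y = \nabla^H_X Y$ and equation \eqref{eq:hor} together with Lemma \ref{lma:levi-civitas} controls the horizontal part, leaving the vertical component. So the real work is in the ``mixed'' components, and I would organize the computation around the three surviving tensor types in the definition of $\omega$:
\begin{itemize}
\item $Y$ vertical, $Z = Z_H$ horizontal: here $\inn{\nabla^\oplus_X Y, Z_H}_M = 0$, and expanding $\inn{\nabla^M_X Y, Z_H}_M$ by Koszul's formula, splitting $X = PX + (1-P)X$ and using Lemma \ref{lma:hor-vert} to kill brackets of vertical with horizontal-lift fields, should reproduce exactly the $S(X,Z,Y) - S(X,Y,Z)$ and $\Omega$-terms in $\omega(X)(Y,Z)$ via Proposition \ref{prop:expression-tensor} and Definition \ref{defn:tensors}(1)--(2).
\item $Y = Y_H$ horizontal, $Z$ vertical: symmetrically, $\inn{\nabla^\oplus_X Y_H, Z}_M = \inn{\nabla^H_X Y_H, Z}_M = 0$ since $\nabla^H$ lands in horizontal fields, and the Koszul expansion of $\inn{\nabla^M_X Y_H, Z}_M$ must match the corresponding terms of $\omega(X)(Y_H,Z)$ — this case is forced to be consistent with the previous one by the metric property of both connections.
\item $Y = Y_H$, $Z = Z_H$ both horizontal: one compares $\inn{\nabla^M_X Y_H, Z_H}_M$ with $\inn{\nabla^H_X Y_H, Z_H}_M$; by \eqref{eq:hor} and a Koszul computation on the base (pulled back via $\pi$), together with Lemma \ref{lma:levi-civitas} applied when $X$ too is a horizontal lift and a separate short argument when $X$ is vertical, the discrepancy is a combination of $\Omega$-terms matching $\omega$.
\end{itemize}

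The main obstacle I expect is not any single case but the sign and index bookkeeping in matching the Koszul expansion against the precise linear combination $S(X,Z,Y) - S(X,Y,Z) + \tfrac12\Omega(X,Z,Y) - \tfrac12\Omega(X,Y,Z) + \tfrac12\Omega(Y,Z,X)$ in Definition \ref{defn:tensors}(3): one has to be careful that $S$ is symmetric in its first two slots (Proposition \ref{prop:expression-tensor}) while $\Omega$ is antisymmetric in its first two slots (clear from the definition via the bracket), and that the brackets $[(1-P)X, PY]$ appearing in $S$ are themselves vertical by Lemma \ref{lma:hor-vert} so that pairing them with $P(\cdot)$ is harmless. A clean way to sidestep case-chasing is to write, for arbitrary real $X,Y,Z$, the full difference $2\inn{\nabla^M_X Y - \nabla^\oplus_X Y, Z}_M$ as a single Koszul-type expression, substitute $X = (1-P)X + PX$ etc.\ everywhere, discard the terms that survive in $\nabla^\oplus$ (which are exactly the ones with no ``P-mixing''), and recognize the remainder; I would then just check that this remainder equals $2\omega(X)(Y,Z)$ by definition, invoking Lemma \ref{lma:hor-vert} to eliminate the vertical-horizontal brackets and Proposition \ref{prop:expression-tensor} to rewrite the $S$-contributions. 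This is essentially the computation carried out by Bismut, so I would cite \cite{Bis86} and \cite[Section 10.1]{BGV92} for the original derivation and present the verification in the notation established above.
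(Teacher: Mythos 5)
Your proposal is correct and follows essentially the same route as the paper: reduce by tensoriality to real pure‑type (vertical or horizontal‑lift) vector fields, expand via Koszul's formula and Lemma~\ref{lma:levi-civitas}/\eqref{eq:hor}, use Lemma~\ref{lma:hor-vert} to eliminate mixed brackets, and match the surviving terms against the $S$- and $\Omega$-contributions in Definition~\ref{defn:tensors}(3). The only cosmetic difference is organizational: the paper explicitly invokes anti-symmetry of the difference in $Y,Z$ (both connections being metric) to cut the case count to five, whereas you reach the same reduction by tracking where $S$ and $\Omega$ vanish; both devices are present in your write-up, so the proofs coincide in substance.
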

\proof
We follow the proof in \cite[Proposition 10.6]{BGV92}. Without loss of generality we may assume that $X,Y,Z \in \sX(M)$ are all real. We repeatedly use Lemma \ref{lma:hor-vert} and the fact that $\inn{\nabla_X^M Y,Z}_M - \inn{\nabla_X^\oplus Y,Z}_M$ is a tensor in $X,Y,Z$ which is anti-symmetric in $Y$ and $Z$ because both $\nabla^M$ and $\nabla^\oplus$ are metric. We proceed by a case-by-case study of whether $X,Y$ and $Z$ are horizontal or vertical. Moreover, tensoriality makes it sufficient to consider in the horizontal case only horizontal lifts of vector fields on $B$.

\begin{enumerate}
\item If $X,Y,Z$ are all horizontal lifts then $\nabla^\oplus$ reduces to $\nabla^H$ and the result readily follows from Lemma \ref{lma:levi-civitas} and \eqref{eq:hor} since then also $\omega(X)(Y,Z) =0$.
\item If both $Y$ and $Z$ are vertical then $\inn{\nabla_X^M Y,Z}_M$  and $\inn{\nabla_X^\oplus Y,Z}_M$ both coincide with $\inn{\nabla^{V}_X Y,Z}_M$ and $\omega(X)(Y,Z) = 0$.
\item If $X$ and $Y$ are vertical and $Z$ is horizontal, then $\inn{\nabla_X^\oplus Y,Z}_M$ vanishes whereas
\begin{align*}
\inn{\nabla_X^M Y,Z}_M &= -\half \inn{[Y,Z],X}_M + \half \inn{[Z,X],Y}_M - \half Z( \inn{X,Y}_M ) \\
&= -S(X,Y,Z) = \omega(X)(Y,Z).
\end{align*}
\item If $X$ and $Y$ are horizontal lifts and $Z$ is vertical, then $ \inn{\nabla^\oplus_X Y ,Z}_M = 0$ and Lemma \ref{lma:levi-civitas} implies that
\begin{align*}
\inn{\nabla^M_X Y,Z}_M &= \half \inn{[X,Y],Z}_M\\
& = -\half \Omega(X,Y,Z) = \omega(X)(Y,Z).
\end{align*}
\item Finally, if $X$ is vertical and if $Y$ and $Z$ are horizontal lifts then $\nabla^\oplus_X Y = \Na^H_X Y =0$ by \eqref{eq:hor} since $d\pi(X) =0$, and Lemma \ref{lma:hor-vert} gives that
\begin{align*}
\inn{\nabla_X^M Y,Z}_M &= \half \inn{[X,Y],Z}_M - \half \inn{[Y,Z],X}_M + \half \inn{[Z,X],Y}_M  \\
&= \half \Omega(Y,Z,X) = \omega(X)(Y,Z) .\qedhere
\end{align*}
\end{enumerate}
\endproof

\section{Spin geometry and Clifford modules}
\label{sect:clifford-mod}
We now assume that the Riemannian manifolds $M$ and $B$ that enter the Riemannian submersion $\pi:M \to B$ are even-dimensional $\Tex{spin}^c$ manifolds. Hence letting $\Cl(M)$ denote the complex Clifford algebra generated by the vector fields on $M$, we have a $\zz/2\zz$-graded finitely generated projective module $\sE_M$ over $C^\infty(M)$ together with a hermitian form
\[
\inn{\cd,\cd}_{\sE_M} : \sE_M \ti \sE_M \to C^\infty(M)
\]
and an isomorphism of $\zz/2\zz$-graded $*$-algebras
\[
c : \Cl(M) \to \Tex{End}_{C^\infty(M)}(\sE_M).
\]
The grading operator on $\sE_M$ is denoted by $\ga_M : \sE_M \to \sE_M$. We fix similar data and notation for the $\Tex{spin}^c$-manifold $B$.
%
%
%
%

We choose even metric connections $\nabla^{\sE_M}$ and $\nabla^{\sE_B}$ on $\sE_M$ and $\sE_B$ which are Clifford connections for the Levi-Civita connections on $M$ and $B$, respectively. Thus, they satisfy the relation
\begin{align}
\label{eq:clicon}
\nabla^{\sE_M}_X (c(Y)\cdot s) &= c(Y) \cdot \nabla^{\sE_M}_X(s) + c(\nabla^M_X(Y))\cdot s,
\intertext{for $X,Y \in \sX(M)\,  ,  \, \, s \in \sE_M$, and}
\nabla^{\sE_B}_Z (c(W)\cdot t) &= c(W) \cdot \nabla^{\sE_B}_Z(t) + c(\nabla^B_Z(W))\cdot t,\nonumber
\end{align}
for $Z, W \in \sX(B) \, , \, \, t \in \sE_B$. These even metric Clifford connections are unique up to a purely imaginary one-form.
\medskip

We will now pull back the Clifford action and the Clifford connection on $B$ to a horizontal Clifford action and Clifford connection on $M$. The \emph{horizontal spinor module} is defined as the pullback of the spinor module on $B$:
\[
\sE_H := \sE_B \ot_{C^\infty(B)} C^\infty(M).
\]
The horizontal spinor module is $\zz/2\zz$-graded with grading operator $\ga_H := \ga_B \ot 1$ and it has a hermitian form defined by
\[
\inn{\cd,\cd}_{\sE_H} : \sE_H \ti \sE_H \to C^\infty(M) 
\q \inn{s \ot f, t \ot g}_{\sE_H} := \ov f \cd (\inn{s,t}_{\sE_B} \ci \pi) \cd g.
\]
We let $\Cl_H(M) \su \Cl(M)$ denote the $\zz/2\zz$-graded $*$-subalgebra generated by the horizontal vector fields and refer to it as the \emph{horizontal Clifford algebra}. The \emph{horizontal Clifford action} is then defined by
\[
c_H : \Cl_H(M) \to \Tex{End}_{C^\infty(M)}(\sE_H) \q c_H(X) := \wit c\big(d \pi (X) \big)
\]
for all $X \in \sX_H(M)$, where 
\[
\wit c( Y \ot f)(t \ot g) := c(Y)(t) \ot f \cd g \q Y \in \sX(B) \, , \, \, f \in C^\infty(M)
\]
for all $t \in \sE_B$, $g \in C^\infty(M)$. We equip $\sE_H$ with the pullback connection 
\[
\Na^{\sE_H} := \pi^* \Na^{\sE_B}
\]
defined as in \eqref{eq:pullconn}. The connection $\Na^{\sE_H}$ is then even and metric and it is compatible with the horizontal Clifford action in the sense that
\begin{equation}\label{eq:horclicon}
[ \Na^{\sE_H}_X, c_H(Y) ] = c_H( \Na^H_X(Y) ) \q X \in \sX(M) \, , \, \, Y \in \sX_H(M).
\end{equation}

\begin{lma}\label{l:horiclif}
The horizontal Clifford action 
\[
c_H : \Cl_H(M) \to \Tex{End}_{C^\infty(M)}(\sE_H)
\]
is an isomorphism of $\zz/2\zz$-graded $*$-algebras.
\end{lma}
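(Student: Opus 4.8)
The plan is to recognise $c_H$, after the identifications supplied by the Riemannian submersion structure, as the base change along $\pi$ of the spin$^c$ Clifford action $c : \Cl(B) \to \Tex{End}_{C^\infty(B)}(\sE_B)$ of $B$, which is an isomorphism of $\zz/2\zz$-graded $*$-algebras by the very definition of a $\Tex{spin}^c$-structure on $B$. Granting this, the lemma follows because both forming Clifford algebras and forming endomorphism algebras of finitely generated projective modules are compatible with the base change $-\ot_{C^\infty(B)}C^\infty(M)$.

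The first step is to identify the horizontal Clifford algebra $\Cl_H(M)$ with the pullback $\Cl(B)\ot_{C^\infty(B)}C^\infty(M)$. Since $\inn{\cd,\cd}_M$ makes $\sX(M)\cong\sX_V(M)\oplus\sX_H(M)$ an orthogonal direct sum, $\Cl(M)$ decomposes as a $\zz/2\zz$-graded tensor product $\Cl(\sX_V(M))\hot\Cl(\sX_H(M))$, with $\sX_H(M)$ corresponding to $1\hot\sX_H(M)$; hence the $*$-subalgebra $\Cl_H(M)\su\Cl(M)$ generated by the horizontal vector fields is canonically isomorphic to the abstract Clifford algebra $\Cl(\sX_H(M),\inn{\cd,\cd}_M)$. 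Next, because $\pi$ is a Riemannian submersion, $d\pi$ restricts to an isometric isomorphism of $C^\infty(M)$-modules $\sX_H(M)\xrightarrow{\,\sim\,}\sX(B)\ot_{C^\infty(B)}C^\infty(M)$, the target carrying the pullback hermitian form; by functoriality of the Clifford construction for isometries of hermitian modules this extends to a $\zz/2\zz$-graded $*$-isomorphism of the associated Clifford algebras. Finally, the Clifford algebra of a pullback module is the pullback of the Clifford algebra, i.e. $\Cl\big(\sX(B)\ot_{C^\infty(B)}C^\infty(M)\big)\cong\Cl(B)\ot_{C^\infty(B)}C^\infty(M)$, which is immediate from the universal property of Clifford algebras. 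Composing these three identifications gives a $\zz/2\zz$-graded $*$-isomorphism $\Cl_H(M)\cong\Cl(B)\ot_{C^\infty(B)}C^\infty(M)$ under which a horizontal lift $Y_H$ corresponds to $Y\ot 1$.

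The second step is to trace $c_H$ through these identifications. Unwinding $c_H(X):=\wit c(d\pi(X))$ together with the defining formula $\wit c(Y\ot f)(t\ot g)=c(Y)(t)\ot f\cd g$, one checks that $c_H(f\cd Y_H)=\wit c(Y\ot f)$ is the image of $c(Y)\ot f\in\Tex{End}_{C^\infty(B)}(\sE_B)\ot_{C^\infty(B)}C^\infty(M)$ under the canonical algebra map
\[
\Tex{End}_{C^\infty(B)}(\sE_B)\ot_{C^\infty(B)}C^\infty(M)\to\Tex{End}_{C^\infty(M)}\big(\sE_B\ot_{C^\infty(B)}C^\infty(M)\big)=\Tex{End}_{C^\infty(M)}(\sE_H).
\]
Thus, under the identification of the first step, $c_H$ becomes the composite of $c\ot\Id_{C^\infty(M)}:\Cl(B)\ot_{C^\infty(B)}C^\infty(M)\to\Tex{End}_{C^\infty(B)}(\sE_B)\ot_{C^\infty(B)}C^\infty(M)$ with the above canonical map. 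Since $c$ is an isomorphism of $\zz/2\zz$-graded $*$-algebras, so is $c\ot\Id_{C^\infty(M)}$; and since $\sE_B$ is finitely generated projective over $C^\infty(B)$ — hence a direct summand of a free module, for which the claim is obvious — the canonical map above is an isomorphism as well, of $\zz/2\zz$-graded $*$-algebras once $\sE_H$ is equipped with the grading $\ga_H=\ga_B\ot 1$ and the pullback hermitian form $\inn{\cd,\cd}_{\sE_H}$. Composing, $c_H$ is an isomorphism.

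The conceptual content is entirely in the first step; the main obstacle is bookkeeping, namely verifying that these are genuinely module-level (Serre--Swan) statements with all auxiliary structures carried along compatibly. In particular one must check that the $*$-operation on $\Cl_H(M)$ inherited from $\Cl(M)$ agrees with the one determined by $\inn{\cd,\cd}_M$ on $\sX_H(M)$ — this is exactly where the isometry property of $d\pi$ on $\sX_H(M)$ enters — and that the subalgebra of $\Cl(M)$ generated by the horizontal vector fields really is all of $\Cl(\sX_H(M))$ and nothing smaller, which is what the graded tensor product decomposition provides, together with the matching of gradings and of the hermitian form on $\sE_H$ with the pullback of that on $\sE_B$ as built into its definition.
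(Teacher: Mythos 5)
Your proposal is correct and takes essentially the same route as the paper: both build the isomorphism by chaining
\[
\Cl_H(M)\;\cong\;\Cl(B)\ot_{C^\infty(B)}C^\infty(M)\;\cong\;\Tex{End}_{C^\infty(B)}(\sE_B)\ot_{C^\infty(B)}C^\infty(M)\;\cong\;\Tex{End}_{C^\infty(M)}(\sE_H)
\]
using, respectively, the isometry $\sX_H(M)\cong\sX(B)\ot_{C^\infty(B)}C^\infty(M)$ from the Riemannian submersion structure, the spin$^c$ structure on $B$, and that $\sE_B$ is finitely generated projective. Your version is slightly more explicit on two points that the paper leaves implicit — that the subalgebra of $\Cl(M)$ generated by $\sX_H(M)$ really is the abstract Clifford algebra $\Cl(\sX_H(M))$ (via the graded tensor product decomposition of $\Cl(M)$ along the orthogonal splitting of $\sX(M)$), and that the map $c_H$ itself coincides with the composite isomorphism rather than some other one — but the underlying argument is identical.
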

\proof
Since the $C^\infty(B)$-module $\sE_B$ is finitely generated projective, it follows that we have an isomorphism
\[
\Tex{End}_{C^\infty(B)}(\sE_B) \ot_{C^\infty(B)} C^\infty(M) \cong \Tex{End}_{C^\infty(M)}(\sE_H)
\]
of $\zz/2\zz$-graded $*$-algebras. Indeed, any simple tensor $T \ot f$ with $T \in \Tex{End}_{C^\infty(B)}(\sE_B)$ and $f \in  C^\infty(M)$ acts on $\sE_H = \sE_B \ot_{C^\infty(B)} C^\infty(M)$ via the assignment $(T \ot f)(s \ot g) := T(s) \ot f \cd g$. Furthermore, using that $\sE_B$ is a spinor module on $B$ we obtain the isomorphism
\[
\Cl(B) \ot_{C^\infty(B)} C^\infty(M) \cong \Tex{End}_{C^\infty(B)}(\sE_B) \ot_{C^\infty(B)} C^\infty(M)
\]
of $\zz/2\zz$-graded $*$-algebras. Finally, since $\sX_H(M) \cong \sX(B) \otimes_{C^\infty(B)} C^\infty(M)$, isometrically, it follows that
\[
\Cl_H(M)  \cong \Cl(B) \otimes_{C^\infty(B)} C^\infty(M)
\]
as $\zz/2\zz$-graded $*$-algebras.
\endproof

We will now focus on constructing a vertical Clifford action and Clifford connection following \cite[Section 10.2]{BGV92} (to some extent). We let $\Cl_V(M) \su \Cl(M)$ denote the $\zz/2\zz$-graded $*$-subalgebra generated by the vertical vector fields and we refer to it as the \emph{vertical Clifford algebra}.

\begin{defn}
The {\em vertical spinor module} is defined to be the $\zz/2\zz$-graded $C^\infty(M)$-module
\[
\sE_V := \sE_H^* \ot_{\Cl_H(M)} \sE_M \, , 
\]
with $\zz/2\zz$-grading operator $\ga_V := \ga_H^* \ot \ga_M$, where $\ga_H^*$ is the $\zz/2\zz$-grading operator on the dual module $\sE_H^*$ induced by the $\zz/2\zz$-grading on $\sE_H$. The {\em vertical Clifford action} is defined by
\[
c_V : \Cl_V(M) \to \Tex{End}_{C^\infty(M)}(\sE_V) \q c_V(X) := \ga_H^* \ot c(X)
\]
for all $X \in \sX_V(M)$. Furthermore, we have the hermitian form on $\sE_V$ defined by
\[
\binn{ \bra{\xi_1} \otimes s_1,  \bra{\xi_2} \otimes s_2}_{\sE_V}
= \binn{ s_1, \big( \ket{\xi_1}\bra{\xi_2 } \big) (s_2)}_{\sE_M}
\]
for $\bra{\xi_i} \in \sE_H^*$ and $s_i \in \sE_M$, $i=1,2$. The element $\ket{\xi_1}\bra{\xi_2} \in \End_{C^\infty(M)}(\sE_H)$ acts on $\sE_M$ according to the identifications
\[
\End_{C^\infty(M)}(\sE_H) \cong \Cl_H(M) \su \End_{C^\infty(M)}(\sE_M).
\]
\end{defn}

The reason for calling this module on $M$ the ``vertical spinor module'' lies in the following two results:

\begin{prop}\label{l:vertclif}
The vertical Clifford action 
\[
c_V : \Cl_V(M) \to \End_{C^\infty(M)}(\sE_V)
\]
is an isomorphism of $\zz/2\zz$-graded $*$-algebras. In particular, we obtain a $K$-orientation of the Riemannian submersion $\pi : M \to B$, see \cite[Section $10$, $(3)$]{C82} and \cite[Appendix B]{CS84}.
\end{prop}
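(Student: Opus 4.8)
The plan is to exploit the tensor-product structure $\sE_V = \sE_H^* \ot_{\Cl_H(M)} \sE_M$ together with the two Clifford isomorphisms already established: the horizontal one from Lemma \ref{l:horiclif}, $c_H : \Cl_H(M) \xrightarrow{\cong} \End_{C^\infty(M)}(\sE_H)$, and the full one $c : \Cl(M) \xrightarrow{\cong} \End_{C^\infty(M)}(\sE_M)$. The key algebraic input is the (graded) tensor decomposition of the Clifford algebra of $M$ along the orthogonal splitting $\sX(M) \cong \sX_V(M) \oplus \sX_H(M)$: since horizontal and vertical vector fields anticommute in $\Cl(M)$ (they are orthogonal), one has an isomorphism of $\zz/2\zz$-graded $*$-algebras $\Cl(M) \cong \Cl_V(M) \hot_{C^\infty(M)} \Cl_H(M)$. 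Transporting this through $c$ and $c_H$ gives $\End_{C^\infty(M)}(\sE_M) \cong \Cl_V(M) \hot \End_{C^\infty(M)}(\sE_H)$, i.e. $\sE_M$ is, as a module over $\Cl_V(M) \hot \Cl_H(M)$, of the form $\sE_V' \hot \sE_H$ for the unique (up to the usual ambiguity) $\Cl_V(M)$-module $\sE_V'$, and one identifies $\sE_V' \cong \sE_H^* \ot_{\Cl_H(M)} \sE_M = \sE_V$ canonically by the general Morita-type fact that for a spinor module $\sE_H$ over a Clifford algebra $\Cl_H$, the functor $\sE_H^* \ot_{\Cl_H} (-)$ is inverse to $\sE_H \hot (-)$ on $\Cl_H$-bimodules.

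Concretely I would proceed in the following steps. First, record that $\sX_H(M) \subset \sX(M)$ is the orthogonal complement of the vertical fields, so in $\Cl(M)$ any $X \in \sX_V(M)$ and any $Y \in \sX_H(M)$ satisfy $c(X)c(Y) + c(Y)c(X) = -2\inn{X,Y}_M = 0$; hence the subalgebras $\Cl_V(M)$ and $\Cl_H(M)$ graded-commute and generate $\Cl(M)$, giving the graded isomorphism $\Cl(M) \cong \Cl_V(M) \hot_{C^\infty(M)} \Cl_H(M)$. Second, apply $\Tex{End}_{C^\infty(M)}(\sE_M) \cong \Cl(M)$ and $\Tex{End}_{C^\infty(M)}(\sE_H) \cong \Cl_H(M)$ (the latter from Lemma \ref{l:horiclif}) to write $\Tex{End}_{C^\infty(M)}(\sE_M) \cong \Cl_V(M) \hot \Tex{End}_{C^\infty(M)}(\sE_H)$. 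Third, pass to the module side: because $\sE_H$ is finitely generated projective over $C^\infty(M)$ with $\Tex{End}(\sE_H) \cong \Cl_H(M)$, the assignment $N \mapsto \sE_H^* \ot_{\Cl_H(M)} N$ and $N' \mapsto \sE_H \hot N'$ are mutually inverse equivalences between $\Cl_V(M)\hot\Cl_H(M)$-modules and $\Cl_V(M)$-modules; applying this to $N = \sE_M$ exhibits $\sE_V = \sE_H^* \ot_{\Cl_H(M)} \sE_M$ as a $\zz/2\zz$-graded finitely generated projective $C^\infty(M)$-module whose endomorphism algebra, with the action $c_V(X) = \ga_H^* \ot c(X)$ transported, is exactly $\Cl_V(M)$ — that is, $c_V : \Cl_V(M) \to \End_{C^\infty(M)}(\sE_V)$ is an isomorphism of $\zz/2\zz$-graded $*$-algebras. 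Finally, one must check that $c_V$ is a $*$-homomorphism for the hermitian form defined on $\sE_V$, which reduces to the compatibility of $\inn{\cd,\cd}_{\sE_V}$ with the bra-ket identification $\End_{C^\infty(M)}(\sE_H) \cong \Cl_H(M) \su \End_{C^\infty(M)}(\sE_M)$ and the fact that $c$ and $c_H$ are already $*$-preserving; the adjointability of everything follows because all modules are finitely generated projective. The concluding sentence about the $K$-orientation is then immediate: an isomorphism $\Cl_V(M) \cong \End(\sE_V)$ is exactly the statement that the vertical Clifford bundle is a (fiberwise) full matrix algebra bundle, i.e. the fibers carry a $\Tex{spin}^c$-like structure, which is the $K$-oriented data for the submersion in the sense of \cite[Section $10$]{C82}, \cite[Appendix B]{CS84}.

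The main obstacle I anticipate is step three: cleanly establishing the Morita-style equivalence at the level of $C^\infty(M)$-modules so that $\sE_V$ inherits the right size and the right hermitian form, rather than merely getting an abstract isomorphism after tensoring up. One has to be careful that $\sE_H^* \ot_{\Cl_H(M)} \sE_M$ is taken over the algebra $\Cl_H(M)$ acting on $\sE_M$ via the inclusion $\Cl_H(M) \su \Cl(M) \cong \End_{C^\infty(M)}(\sE_M)$, and that this balanced tensor product is again finitely generated projective over $C^\infty(M)$ — which it is, because $\sE_H$ is a Morita equivalence bimodule between $C^\infty(M)$ and $\Cl_H(M)$, so $\sE_H^* \ot_{\Cl_H(M)}(-)$ sends finitely generated projectives to finitely generated projectives and commutes with the relevant endomorphism-algebra computations. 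Once that bookkeeping is in place, the grading ($\ga_V = \ga_H^* \ot \ga_M$) and the $*$-structure are routine to match, and the proof concludes.
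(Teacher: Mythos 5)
Your argument is correct and follows essentially the same route as the paper: both proofs rest on the graded decomposition $\Cl(M) \cong \Cl_H(M) \hot_{C^\infty(M)} \Cl_V(M)$, the identification $\Cl_H(M) \cong \End_{C^\infty(M)}(\sE_H)$ from Lemma \ref{l:horiclif}, and the Morita-type cancellation $\sE_H^* \ot_{\Cl_H(M)} \Cl_H(M) \ot_{\Cl_H(M)} \sE_H \cong C^\infty(M)$. The paper simply carries out the computation of $\End_{C^\infty(M)}(\sE_V)$ through an explicit chain of $*$-algebra isomorphisms and records the resulting map $\bra{\xi} \ot (x \hot y) \ot \ket{\eta} \mapsto \inn{\xi, c_H(x)\ga_H^{\deg y}(\eta)}_{\sE_H}\cd y$, whereas you package the same content as a Morita-equivalence statement, which amounts to precisely the bookkeeping you flagged as the remaining obstacle.
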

\begin{proof}
Since $\sE_H^*$ and $\sE_M$ are finitely generated projective modules over $\Cl_H(M)$ and $C^\infty(M)$, respectively, we have the isomorphisms:
\[
\begin{split}
\End_{C^\infty(M)}(\sE_V) 
& \cong \sE_V \ot_{C^\infty(M)} \sE_V^* \\
& \cong \sE_H^* \ot_{\Cl_H(M)} \sE_M \ot_{C^\infty(M)} \sE_M^* \ot_{\Cl_H(M)} \sE_H \\
& \cong \sE_H^* \ot_{\Cl_H(M)} \End_{C^\infty(M)}(\sE_M) \ot_{\Cl_H(M)} \sE_H
\end{split}
\]
of $\zz/2\zz$-graded $*$-algebras. Moreover, since $\sE_M$ defines a $\Tex{spin}^c$-structure on $M$ we have that 
\[
\End_{C^\infty(M)}(\sE_M) \cong \Cl(M) \cong \Cl_H(M) \hot_{C^\infty(M)} \Cl_V(M)
\]
where the tensor product on the right-hand side is graded. Now, from Lemma \ref{l:horiclif} we know that $\Cl_H(M) \cong \sE_H \ot_{C^\infty(M)} \sE_H^*$ and hence that
\[
\begin{split}
& \sE_H^* \ot_{\Cl_H(M)} \End_{C^\infty(M)}(\sE_M) \ot_{\Cl_H(M)} \sE_H \\
& \q \cong \sE_H^* \ot_{\Cl_H(M)} \big( \Cl_H(M) \hot_{C^\infty(M)} \Cl_V(M) \big) \ot_{\Cl_H(M)} \sE_H \\
& \q \cong \Cl_V(M)
\end{split}
\]
where the last isomorphism of $\zz/2\zz$-graded $*$-algebras is given explicitly by
\[
\bra{\xi} \ot (x \hot y) \ot \ket{\eta} \mapsto \inn{\xi, c_H(x) \ga_H^{\Tex{deg}(y)}(\eta)}_{\sE_H} \cd y
\]
for $\xi,\eta \in \sE_H$, $x \in \Cl_H(M)$ and homogeneous $y \in \Cl_V(M)$.
\end{proof}


\begin{prop}
\label{prop:spinors-vert-hor}
We have an isometric isomorphism of $\zz/2\zz$-graded $C^\infty(M)$-modules:
\[
\sE_H \ot_{C^\infty(M)} \sE_V \cong \sE_M
\]
which is compatible with the respective Clifford actions under the isomorphism $\Cl_H(M) \hot_{C^\infty(M)} \Cl_V(M) \cong \Cl(M)$ (graded tensor product). Remark here that $\sE_H \ot_{C^\infty(M)} \sE_V$ is $\zz/2\zz$-graded with grading operator $\ga_H \ot \ga_V$ and that it carries the hermitian form
\[
\inn{ \xi_1 \ot \eta_1, \xi_2 \ot \eta_2} := \inn{\eta_1, \eta_2}_{\sE_V} \cd \inn{\xi_1,\xi_2}_{\sE_H} \q \xi_1,\xi_2 \in \sE_H \, , \, \, \eta_1,\eta_2 \in \sE_V.
\]
Notice also that $\Cl_H(M) \hot_{C^\infty(M)} \Cl_V(M)$ acts on $\sE_H \ot_{C^\infty(M)} \sE_V$ according to the rule
\[
(x \hot y)( \xi \ot \eta) := c_H(x) \ga_H^{\Tex{deg}(y)}(\xi) \ot c_V(y)(\eta)
\]
for $x \in \Cl_H(M)$ and homogeneous $y \in \Cl_V(M)$.
\end{prop}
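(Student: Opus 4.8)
The plan is to construct the isomorphism $\sE_H \ot_{C^\infty(M)} \sE_V \to \sE_M$ directly from the defining data. Recall $\sE_V = \sE_H^* \ot_{\Cl_H(M)} \sE_M$, so a simple tensor in $\sE_H \ot_{C^\infty(M)} \sE_V$ has the form $\xi \ot (\bra{\zeta} \ot s)$ with $\xi, \zeta \in \sE_H$ and $s \in \sE_M$. The obvious candidate map is
\[
\Phi : \xi \ot (\bra{\zeta} \ot s) \mapsto \big( \ket{\xi}\bra{\zeta} \big)(s),
\]
where $\ket{\xi}\bra{\zeta} \in \End_{C^\infty(M)}(\sE_H) \cong \Cl_H(M) \su \End_{C^\infty(M)}(\sE_M)$ acts on $s$ via the Clifford action $c$ on $\sE_M$. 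First I would check this is well defined over the balanced tensor products: $C^\infty(M)$-bilinearity in the outer tensor is clear, and well-definedness over $\Cl_H(M)$ in the inner tensor follows because for $x \in \Cl_H(M)$ one has $\ket{\xi}\bra{\zeta x} = \ket{\xi}\bra{\zeta}\, c_H(x)^*$ as operators, matching the action of $x$ on $\sE_M$ through the inclusion $\Cl_H(M) \su \End_{C^\infty(M)}(\sE_M)$ (up to the adjoint/sign bookkeeping that the $*$-structure handles).

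Next I would verify that $\Phi$ is isometric for the stated hermitian forms. Unwinding the definition of $\inn{\cd,\cd}_{\sE_V}$, we have $\inn{\bra{\zeta_1}\ot s_1, \bra{\zeta_2}\ot s_2}_{\sE_V} = \inn{s_1, (\ket{\zeta_1}\bra{\zeta_2})(s_2)}_{\sE_M}$, so
\[
\inn{\xi_1 \ot (\bra{\zeta_1}\ot s_1), \xi_2 \ot (\bra{\zeta_2}\ot s_2)} = \inn{s_1, (\ket{\zeta_1}\bra{\zeta_2})(s_2)}_{\sE_M} \cd \inn{\xi_1,\xi_2}_{\sE_H},
\]
while $\inn{\Phi(\xi_1 \ot \bra{\zeta_1}\ot s_1), \Phi(\xi_2 \ot \bra{\zeta_2}\ot s_2)}_{\sE_M} = \inn{(\ket{\xi_1}\bra{\zeta_1})(s_1), (\ket{\xi_2}\bra{\zeta_2})(s_2)}_{\sE_M}$. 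Using that $\ket{\xi_2}\bra{\zeta_2}$ and $\ket{\xi_1}\bra{\zeta_1}$ are adjoint to $\ket{\zeta_2}\bra{\xi_2}$ and $\ket{\zeta_1}\bra{\xi_1}$ with respect to $\inn{\cd,\cd}_{\sE_M}$ (these being the images of the $*$-involution on $\Cl_H(M)$), together with the rank-one identity $\bra{\xi_1}\ket{\xi_2} = \inn{\xi_1,\xi_2}_{\sE_H}$, the two expressions agree. Then I would show $\Phi$ is bijective: surjectivity follows because $\sE_M$ is generated over $\Cl_H(M) \cong \End_{C^\infty(M)}(\sE_H)$ by the identity (more precisely, the span of rank-one operators $\ket{\xi}\bra{\zeta}$ acting on $\sE_M$ exhausts $\sE_M$ since $\sE_H$ is finitely generated projective with $1 \in \End_{C^\infty(M)}(\sE_H)$ a finite sum of such); injectivity then follows from isometry since the hermitian forms are nondegenerate, or by a rank/dimension count using that the proof of Proposition \ref{l:vertclif} already identifies $\sE_H^* \ot_{\Cl_H(M)} \End_{C^\infty(M)}(\sE_M) \ot_{\Cl_H(M)} \sE_H \cong \Cl_V(M)$.

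Finally I would check Clifford-action compatibility. On horizontal elements $x \in \Cl_H(M)$, the stated action sends $\xi \ot \eta \mapsto c_H(x)\ga_H^{\deg(y)}(\xi)\ot c_V(y)(\eta)$ with $y=1$, i.e.\ just acts on the $\sE_H$-leg by $c_H(x)$; applying $\Phi$ gives $\ket{c_H(x)\xi}\bra{\zeta}(s) = c_H(x)\,(\ket{\xi}\bra{\zeta})(s) = c(x)\Phi(\xi\ot\eta)$, using that $c_H(x)$ is literally the restriction of $c(x)$ under $\Cl_H(M)\su\Cl(M)$. On vertical elements $y \in \Cl_V(M)$, since $c_V(y) = \ga_H^* \ot c(y)$, applying $\Phi$ produces $(\ket{\xi}\bra{\zeta})\,c(y)(s)$ up to the grading sign $\ga_H^{\deg(y)}$ on the $\sE_H$ side; because $c(y)$ for vertical $y$ graded-commutes with $\Cl_H(M)$ inside $\Cl(M)$, moving $c(y)$ past $\ket{\xi}\bra{\zeta}$ produces exactly the Koszul sign $(-1)^{\deg(y)\deg(x)}$ recorded by $\ga_H^{\deg(y)}$, so $\Phi$ intertwines the graded-tensor action with $c$. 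The main obstacle I expect is bookkeeping the grading signs consistently between $\ga_H^*$ on $\sE_H^*$, $\ga_H^{\deg(y)}$ in the module action, and the graded commutator $\Cl_H(M)\hot_{C^\infty(M)}\Cl_V(M)\cong\Cl(M)$ — i.e.\ making sure the $*$-structure and the $\zz/2\zz$-grading conventions chosen in the Definition above are exactly the ones that make $\Phi$ a graded $*$-isomorphism rather than just an isomorphism up to sign; the analytic content is minimal since everything is finitely generated projective.
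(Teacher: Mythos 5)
Your proposal is correct and takes essentially the same route as the paper, which likewise builds the isomorphism from the chain $\sE_H \ot_{C^\infty(M)} \sE_H^* \ot_{\Cl_H(M)} \sE_M \cong \End_{C^\infty(M)}(\sE_H) \ot_{\Cl_H(M)} \sE_M \cong \Cl_H(M) \ot_{\Cl_H(M)} \sE_M \cong \sE_M$ via Lemma~\ref{l:horiclif}, and then simply leaves the hermitian, grading and Clifford-compatibility checks to the reader; your explicit map $\Phi$ and the subsequent verifications are exactly those checks spelled out. Only a small convention point: the right $\Cl_H(M)$-action $\bra{\zeta}\cdot x = \bra{c_H(x)^*\zeta}$ corresponds to post-composition $\ket{\xi}\bra{\zeta}\,c_H(x)$ rather than $c_H(x)^*$, but since you already flagged the $*$-bookkeeping this is notational rather than a gap.
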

\proof
By Lemma \ref{l:horiclif} we have the following isomorphism of $C^\infty(M)$-modules:
\[
\begin{split}
\sE_H \ot_{C^\infty(M)} \sE_H^* \ot_{\Cl_H(M)} \sE_M & \cong \Tex{End}_{C^\infty(M)}(\sE_H) \ot_{\Cl_H(M)} \sE_M \\ 
& \cong \Cl_H(M) \ot_{\Cl_H(M)} \sE_M \cong \sE_M.
\end{split}
\]
We leave it to the reader to verify that this isomorphism is compatible with the hermitian forms, the gradings and the Clifford actions. 
\endproof

Our next task is to find an explicit even metric Clifford connection on $\sE_V$ and compare the tensor sum of it with $\Na^{\sE_H}$ with the Clifford connection $\nabla^{\sE_M}$. We let $\Na^{\sE_H^*}$ denote the dual connection, on $\sE_H^*$, described by the formula
\[
\Na_X^{\sE_H^*}( \bra{\xi}) := \bra{ \Na_X^{\sE_H}(\xi) }
\]
for real vector fields $X \in \sX(M)$ and $\xi \in \sE_H$. We remark that the naive choice 
\[
\Na^{\sE_H^*}_X \ot 1 + 1 \ot \Na^{\sE_M}_X : \sE_H^* \ot_{C^\infty(M)} \sE_M \to \sE_H^* \ot_{C^\infty(M)} \sE_M \q X \in \sX(M)
\]
for the connection acting on $\sE_H^* \ot_{\Cl_H(M)} \sE_M$ does not work as this connection is \emph{only} well-defined on $\sE_H^* \ot_{C^\infty(M)} \sE_M$. Instead, we need to introduce correction terms, that make this connection well-defined on $\sE_V$, expressed in terms of the tensor $\omega \in \Om^1(M) \ot_{C^\infty(M)} \Om^2(M)$ of Definition \ref{defn:tensors}. 

We apply the notation $\sharp : \Om^1(M) \to \sX(M)$ and $\flat : \sX(M) \to \Om^1(M)$ for the musical isomorphisms. We then define the homomorphisms
$c : \Om^2(M) \to \Tex{End}_{C^\infty(M)}(\sE_M)$ and  $\wit c : \Om^2(M) \to \Om^1(M) \ot_{C^\infty(M)} \Tex{End}_{C^\infty(M)}(\sE_M)$ by the formulae 
\begin{equation}\label{eq:clifftwo}
\begin{split}
c(\om_1 \we \om_2) & := [ c( \om_1^\sharp), c(\om_2^\sharp)] \\
\wit c(\om_1 \we \om_2) & := \om_1 \ot c(\om_2^\sharp) - \om_2 \ot c(\om_1^\sharp)
\end{split}
\end{equation}
for all $\om_1, \om_2 \in \Om^1(M)$.

\begin{lma}\label{l:commcliff}
We have the relation
\[
\frac{1}{4}[ c(X), c( \om) ] = \wit c(\om)(X)
\]
for all $X \in \sX(M)$ and $\om \in \Om^2(M)$.
\end{lma}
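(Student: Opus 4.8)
The plan is to reduce the identity to decomposable two-forms and then carry out a short, purely algebraic computation inside the Clifford algebra. Both sides of the claimed identity are $C^\infty(M)$-linear in $X$ (the map $c$ is linear, and evaluating the $\Om^1$-factor of $\wit c(\om)$ on $X$ is linear), so it suffices to treat real vector fields $X$. Likewise both sides are $C^\infty(M)$-linear in $\om$, and $\Om^2(M)$ is generated as a $C^\infty(M)$-module by elements of the form $\om_1 \we \om_2$ with $\om_1,\om_2 \in \Om^1(M)$; moreover $c$ and $\wit c$ in \eqref{eq:clifftwo} are defined additively from their values on such decomposables. Hence it is enough to prove
\[
\tfrac14\,[\,c(X),\,c(\om_1 \we \om_2)\,] = \wit c(\om_1 \we \om_2)(X).
\]

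Write $Y := \om_1^\sharp$ and $Z := \om_2^\sharp$. Using the defining property of the musical isomorphisms, $\om_i(X) = \inn{X,\om_i^\sharp}_M$ for real $X$, so by \eqref{eq:clifftwo} the right-hand side equals $\inn{X,Y}_M\,c(Z) - \inn{X,Z}_M\,c(Y)$, while the left-hand side is $\tfrac14\,[\,c(X),[\,c(Y),c(Z)\,]\,]$. I would then expand the double commutator using the Clifford relation $c(X)c(Y) + c(Y)c(X) = 2\inn{X,Y}_M$ (in the sign convention fixed for $c$ in this paper, used consistently throughout). Since $[\,c(Y),c(Z)\,] = 2\,c(Y)c(Z) - 2\inn{Y,Z}_M$ and the scalar summand commutes with $c(X)$, we obtain $[\,c(X),[\,c(Y),c(Z)\,]\,] = 2\big(c(X)c(Y)c(Z) - c(Y)c(Z)c(X)\big)$. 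Moving $c(X)$ to the right first through $c(Y)$ and then through $c(Z)$ via the Clifford relation gives the key identity
\[
c(X)c(Y)c(Z) - c(Y)c(Z)c(X) = 2\inn{X,Y}_M\,c(Z) - 2\inn{X,Z}_M\,c(Y),
\]
and combining the last two displays yields $\tfrac14\,[\,c(X),[\,c(Y),c(Z)\,]\,] = \inn{X,Y}_M\,c(Z) - \inn{X,Z}_M\,c(Y)$, which is exactly $\wit c(\om_1 \we \om_2)(X)$.

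There is no real obstacle here; the two points that require care are (i) keeping the Clifford sign convention consistent through every step of the expansion of the double commutator, and (ii) the (routine) observation that $\Om^2(M)$ is spanned over $C^\infty(M)$ by decomposable two-forms, which is what legitimises the reduction to $\om = \om_1 \we \om_2$. As an alternative to (ii) one may work in a local orthonormal coframe, writing $c(\om) = \sum_{i<j}\om_{ij}\,[\,c(e_i),c(e_j)\,]$ and applying the same computation term by term, but the coordinate-free argument above is cleaner.
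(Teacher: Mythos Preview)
Your proof is correct and follows essentially the same route as the paper: reduce by $C^\infty(M)$-linearity to real $X$ and decomposable $\om = \om_1 \we \om_2$, then expand the double commutator $\tfrac14[c(X),[c(Y),c(Z)]]$ using the Clifford relation to obtain $\inn{X,Y}_M\,c(Z)-\inn{X,Z}_M\,c(Y)$. The only difference is cosmetic: the paper states the outcome of the Clifford computation in one line, whereas you spell out the intermediate step $c(X)c(Y)c(Z)-c(Y)c(Z)c(X)=2\inn{X,Y}_M c(Z)-2\inn{X,Z}_M c(Y)$ explicitly.
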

\proof
Without loss of generality we may assume that $\om = \bra{X_1} \we \bra{X_2}$ for real vector fields $X_1$ and $X_2$ on $M$. Furthermore, we may assume that $X \in \sX(M)$ is a real vector field. We thus have that $c( \om) = [c(X_1),c(X_2)]$ and an application of the Clifford relation $c(Z) c(Y) + c(Y) c(Z) = 2 \inn{Z,Y}_M$ (for \emph{real} vector fields $Y,Z \in \sX(M)$) entails that 
\[
\begin{split}
& \frac{1}{4}[ c(X), c( \om) ] = \frac{1}{4} \big[c(X), [c(X_1),c(X_2)]\big] \\
& \q = \inn{X,X_1}_M c(X_2) - \inn{X,X_2}_M c(X_1)
= \wit c(\om_1 \we \om_2)(X) \qedhere
\end{split}
\]
\endproof

%

\begin{prop}
\label{prop:spin-connections}
The following formula defines an even metric Clifford connection $\nabla^{\sE_V}$ on $\sE_V = \sE_H^* \ot_{\Cl_H(M)} \sE_M$:
\begin{align*}
\nabla_X^{\sE_V} (\phi \ot s) &= \Na_X^{\sE_H^*}(\phi) \ot s + \phi \ot \Na^{\sE_M}_X(s) + \phi \ot \frac{1}{4} c(\omega(X))(s) \, , 
\end{align*}
where $X \in \sX(M)$, $\phi \in \sE_H^*$ and $s \in \sE_M$.
\end{prop}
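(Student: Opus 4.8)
The plan is to verify the three claimed properties of the formula for $\nabla^{\sE_V}$ in turn: well-definedness on the balanced tensor product $\sE_H^* \ot_{\Cl_H(M)} \sE_M$, that it is an even metric connection, and that it is a Clifford connection for the vertical Levi-Civita data, i.e.\ satisfies the analogue of \eqref{eq:clicon} with $c_V$ and $\nabla^V$. The key computational input throughout will be Proposition \ref{p:omedir} relating $\nabla^M$ to $\nabla^\oplus = \nabla^V \oplus \nabla^H$ via $\omega$, together with Lemma \ref{l:commcliff} which converts the one-form-valued correction term $\frac14 c(\omega(X))$ into a commutator with Clifford elements.

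For \textbf{well-definedness}, I would show that the naive connection $\Na^{\sE_H^*}_X \ot 1 + 1 \ot \Na^{\sE_M}_X$ fails to descend precisely because, for $x \in \Cl_H(M)$, the Leibniz-type identities $[\Na^{\sE_H}_X, c_H(x)] = c_H(\Na^H_X x)$ (Equation \eqref{eq:horclicon}, extended from generators) and $[\Na^{\sE_M}_X, c(x)] = c(\nabla^M_X x)$ (Equation \eqref{eq:clicon}) differ: the discrepancy is controlled by the difference $\nabla^M - \nabla^H$ on horizontal vector fields, which by Proposition \ref{p:omedir} is governed by $\omega$. Concretely, one computes that moving $c_H(x)$ across the tensor-balancing relation and comparing the two Leibniz corrections produces a term of the form $c(\nabla^M_X(x) - \nabla^H_X(x))$ acting on $\sE_M$; the extra term $\phi \ot \frac14 c(\omega(X))(s)$ is designed so that, via Lemma \ref{l:commcliff}, its commutator with $c(x)$ for $x \in \Cl_H(M)$ exactly cancels this discrepancy. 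This is the step I expect to be the \textbf{main obstacle}: it requires carefully tracking the graded signs (the tensor product $\Cl_H \hot \Cl_V$ is graded), choosing a convenient set of generators of $\Cl_H(M)$ (horizontal lifts $Y_H$), and using Lemma \ref{lma:levi-civitas} to express $\nabla^M_{X} Y_H - \nabla^H_X Y_H$ in terms of $P[\cdot,\cdot]$ and hence in terms of $S$ and $\Omega$ as they enter the definition of $\omega$.

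For the \textbf{metric property}, I would differentiate the hermitian form $\binn{\bra{\xi_1}\ot s_1, \bra{\xi_2}\ot s_2}_{\sE_V} = \binn{s_1, (\ket{\xi_1}\bra{\xi_2})(s_2)}_{\sE_M}$ along $X$, using that $\Na^{\sE_H}$ (hence $\Na^{\sE_H^*}$) and $\Na^{\sE_M}$ are metric and that $\Na^{\sE_M}$ is a Clifford connection (so it is compatible with the action of $\ket{\xi_1}\bra{\xi_2} \in \Cl_H(M) \subseteq \End(\sE_M)$); the only new terms are those coming from the $\frac14 c(\omega(X))$ correction, and these cancel in the pairing because $\omega(X) \in \Om^2(M)$ is \emph{real}, so $c(\omega(X))$ is skew-adjoint on $\sE_M$ (each $c(\om_1\we\om_2) = [c(\om_1^\sharp),c(\om_2^\sharp)]$ is a commutator of self-adjoint operators, hence skew-adjoint). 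Evenness is immediate since $\Na^{\sE_H^*}$, $\Na^{\sE_M}$ are even and $c$ maps $\Om^2(M)$ into even endomorphisms.

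Finally, for the \textbf{Clifford condition} $[\nabla^{\sE_V}_X, c_V(Z)] = c_V(\nabla^V_X Z)$ for vertical $Z$, I would again use Lemma \ref{l:commcliff}: the commutator of the correction term $\frac14 c(\omega(X))$ with $c(Z)$ equals $\wit c(\omega(X))(Z)$, a one-form-valued endomorphism, which one identifies with (the Clifford action of) the difference $\nabla^M_X Z - \nabla^\oplus_X Z = \nabla^M_X Z - \nabla^V_X Z$ evaluated appropriately — this is exactly the content of Proposition \ref{p:omedir} read in the vertical direction. Combined with the fact that $\Na^{\sE_M}$ satisfies \eqref{eq:clicon} for $\nabla^M$ and that $c_V$ is defined through $c$ on $\sE_M$ tensored with $\ga_H^*$, the correction converts the $\nabla^M$-Leibniz rule into the $\nabla^V$-Leibniz rule, as required. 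Throughout I would reduce to real vector fields and to the homogeneous generators $Y_H$ of $\Cl_H(M)$, extending to the full algebras by $C^\infty(M)$-linearity and the graded Leibniz rule.
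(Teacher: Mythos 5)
Your overall strategy (well-definedness, Clifford condition, metric property, each via Proposition \ref{p:omedir} and Lemma \ref{l:commcliff}) is exactly the paper's, and your arguments for well-definedness and for the Clifford condition are essentially correct and match the paper's proof: in both cases one reduces to generators and uses Lemma \ref{l:commcliff} to rewrite the $\tfrac14 c(\omega(X))$ correction as a commutator that cancels the $\nabla^M$-versus-$\nabla^\oplus$ discrepancy supplied by Proposition \ref{p:omedir}.

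The argument for the metric property, however, has a genuine gap. You assert that ``the only new terms are those coming from the $\tfrac14 c(\omega(X))$ correction, and these cancel in the pairing because $c(\omega(X))$ is skew-adjoint.'' This is not what happens. Write $y:=\ket{\xi_1}\bra{\xi_2}\in\Cl_H(M)\subset\End_{C^\infty(M)}(\sE_M)$. Skew-adjointness turns the two $\omega$-correction contributions into $-\tfrac14\binn{s_1,[c(\omega(X)),y](s_2)}_{\sE_M}$, which is a commutator, not zero, since $y$ does not commute with $c(\omega(X))$ in general. Moreover, there \emph{are} other new terms you have not accounted for: the connections $\Na^{\sE_H^*}$ and $\Na^{\sE_M}$ differentiate the intermediate Clifford element $y$ by two \emph{different} Leibniz rules (that for $\nabla^H$ coming from \eqref{eq:horclicon}, and that for $\nabla^M$ coming from \eqref{eq:clicon}), so a residual term $\binn{s_1,c(\Na^H_X(y)-\Na^M_X(y))(s_2)}_{\sE_M}$ survives. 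It is precisely this residual, and not zero, that the commutator $-\tfrac14\binn{s_1,[c(\omega(X)),y](s_2)}_{\sE_M}$ must cancel, once more by Proposition \ref{p:omedir} together with Lemma \ref{l:commcliff} (first for $y=Y_H$ a horizontal lift, then extended by the graded Leibniz rule since $[\cdot,c(\omega(X))]$ is a derivation). So the metric step requires the same $\omega$-versus-$(\nabla^M-\nabla^H)$ cancellation as the other two steps, rather than a direct skew-adjointness cancellation.
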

\proof
Let $X \in \sX(M)$ be real and let $\xi \in \sE_H$ and $s \in \sE_M$ be given.

We start by showing that $\nabla_X^{\sE_V}$ is well-defined on $\sE_H^* \ot_{\Cl_H(M)} \sE_M$. Thus, let $Y \in \sX_H(M)$ be real. We need to establish that 
\begin{equation}\label{eq:welldef}
\nabla_X^{\sE_V} ( \bra{\xi} \ot c(Y) s) = \Na_X^{\sE_V}( \bra{c_H(Y)\xi} \ot s).
\end{equation}
We compute as follows:
\[
\begin{split}
& \nabla_X^{\sE_V} ( \bra{\xi} \ot c(Y) s) \\
& \q = \bra{ c_H(Y) \Na_X^{\sE_H} \xi} \ot s + \bra{\xi} \ot \big( \Na^{\sE_M}_X + \frac{1}{4} c(\om(X)) \big) c(Y) s \\
& \q = \Na_X^{\sE_V} ( \bra{c_H(Y) \xi} \ot s) \\
& \qqq + \bra{\xi} \ot \big( c( \Na^M_X(Y) - \Na_X^H(Y)) + \frac{1}{4} [ c(\om(X)), c(Y)] \big)(s)
\end{split}
\]
where the second identity follows since both $\Na^{\sE_M}$ and $\Na^{\sE_H}$ are Clifford connections, see \eqref{eq:clicon} and \eqref{eq:horclicon}. The identity in \eqref{eq:welldef} is now a consequence of Proposition \ref{p:omedir} and Lemma \ref{l:commcliff}.

It is clear that $\Na_X^{\sE_V}$ commutes with the grading operator $\ga_V = \ga_H^* \ot \ga_M$ on $\sE_V$.
\medskip

We continue by showing that $\nabla^{\sE_V}$ is a Clifford connection for the action of $\Cl_V(M)$. This amounts to proving the identity
\[
[ \nabla_X^{\sE_V}, c_V(Z) ] (\bra{\xi} \ot s)= c_V(\nabla^V_X(Z)) (\bra{\xi} \ot s)
\]
for any vertical vector field $Z$. We thus compute as follows:
\[
\begin{split}
& \nabla_X^{\sE_V} c_V(Z) (\bra{\xi} \ot s) \\
& \q = \bra{ \Na^{\sE_H}_X \ga_H \xi} \ot c(Z) s  
+ \bra{\ga_H \xi} \ot \big( \Na_X^{\sE_M} + \frac{1}{4} c(\om(X)) \big) c(Z) s \\
& \q = c_V(Z) \Na_X^{\sE_V}( \bra{\xi} \ot s)
+ \bra{\ga_H \xi} \ot \big( c( \Na_X^M(Z)) + \frac{1}{4} [ c(\om(X)), c(Z)] \big) s
\end{split}
\]
It therefore suffices to verify that
\[
c( \Na_X^M(Z)) - \frac{1}{4} [c(Z),c( \om(X))] = c( \Na^V_X(Z)).
\]
But this is a consequence of Proposition \ref{p:omedir} and Lemma \ref{l:commcliff}.
\medskip

We finally show that $\Na^{\sE_V}$ is metric. Thus, let $\eta \in \sE_H$ and $t \in \sE_M$ be two extra elements. By Lemma \ref{l:horiclif} we may find a unique $y \in \Cl_H(M)$ with $c_H(y) = \ket{\xi}\bra{\eta}$. Using that both $\Na^{\sE_M}$ and $\Na^{\sE_H}$ are metric Clifford connections we then compute that
\[
\begin{split}
& \binn{\Na_X^{\sE_V}(\bra{\xi} \ot s), \bra{\eta} \ot t}_{\sE_V} + \binn{\bra{\xi} \ot s, \Na_X^{\sE_V}(\bra{\eta} \ot t)}_{\sE_V} \\
& \q = X\big( \inn{\bra{\xi} \ot s, \bra{\eta} \ot t}_{\sE_V} \big) + \inn{s, c( \Na_X^H(y) - \Na_X^M(y))(t)}_{\sE_M} \\
& \qqq - \frac{1}{4}\binn{s, [ c( \om(X)), c(y) ](t)}_{\sE_M}.
\end{split}
\]
The fact that $\Na^{\sE_V}$ is metric can now be derived from Proposition \ref{p:omedir} and Lemma \ref{l:commcliff}.
\endproof


The following result is a restatement of \cite[Lemma 10.13]{BGV92}. The {\em mean curvature} of the Riemannian submersion $\pi : M \to B$ is defined as the $1$-form
\begin{equation}\label{eq:mean-curv}
k := (\Tex{Tr} \ot 1)(S) \in \Om^1(M)
\end{equation}
where $\Tex{Tr} : \Om^1(M) \ot_{C^\infty(M)} \Om^1(M) \to C^\infty(M)$ is the {\em trace} defined by $\Tex{Tr}( \bra{X} \ot \bra{Y} ) := \inn{Y,X}_M$ for real vector fields $X,Y \in \sX(M)$.

\begin{lma}\label{lma:mean-curv}
We have the identity
\[
(c \ot c)(\sharp \ot 1)(\omega) = - 2 c(k^\sharp) - \frac{1}{2}(c \ot c)(1 \ot \sharp)(\Omega) .
\]
\end{lma}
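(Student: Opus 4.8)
The plan is to compute both sides by expanding $\omega$ according to its definition (Definition \ref{defn:tensors}(3)) in terms of the tensors $S$ and $\Omega$, and to match the resulting Clifford-algebra expressions term by term. Recall that $\omega(X)(Y,Z) = S(X,Z,Y) - S(X,Y,Z) + \frac12\Omega(X,Z,Y) - \frac12\Omega(X,Y,Z) + \frac12\Omega(Y,Z,X)$. Applying $(c\otimes c)(\sharp\otimes 1)$ means: take the $\Omega^1(M)$-leg of $\omega$ (the $X$-slot), apply $\sharp$ and then $c$ to get an endomorphism of $\sE_M$; and take the $\Omega^2(M)$-leg (the $(Y,Z)$-slot) and apply the map $c:\Omega^2(M)\to\End_{C^\infty(M)}(\sE_M)$ from \eqref{eq:clifftwo}, i.e. $c(\bra{Y}\wedge\bra{Z}) = [c(Y),c(Z)]$. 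So the left-hand side is, schematically, a sum over an orthonormal frame of contractions of $c(e_i)$ against $[c(\,\cdot\,),c(\,\cdot\,)]$ with coefficients read off from $S$ and $\Omega$.

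First I would split the five terms of $\omega$ into the two $S$-terms and the three $\Omega$-terms. For the $S$-part, I would use the symmetry $S(X,Y,Z) = S(Y,X,Z)$ in its first two slots (Proposition \ref{prop:expression-tensor}) together with the fact that $S$ is supported on $X,Y$ vertical and $Z$ horizontal; after antisymmetrizing the $(Y,Z)$-slot via the bracket $[c(\cdot),c(\cdot)]$, the combination $S(X,Z,Y) - S(X,Y,Z)$ collapses and, upon tracing one vertical index, produces exactly the mean-curvature term $-2c(k^\sharp)$ — here the factor $2$ comes from the two ways the traced index can land in the antisymmetrized pair, and the sign from the conventions in \eqref{eq:clifftwo} and \eqref{eq:mean-curv}. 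I would be careful that the trace $\Tex{Tr}$ in \eqref{eq:mean-curv} pairs slots one and two of $S$, which is legitimate precisely because of the symmetry just invoked.

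For the $\Omega$-part, I would use that $\Omega$ is antisymmetric in its first two slots (immediate from the definition via $[(1-P)X,(1-P)Y]$) and supported on $X,Y$ horizontal, $Z$ vertical. The terms $\frac12\Omega(X,Z,Y) - \frac12\Omega(X,Y,Z)$ have the "contracted" index $X$ in the first slot while $Y,Z$ run over the $2$-form leg; since $\Omega$ vanishes unless two of its arguments are horizontal and one vertical, and since the $2$-form leg gets antisymmetrized by the bracket, these two terms either cancel or recombine with the genuinely different term $\frac12\Omega(Y,Z,X)$, whose contracted slot is the \emph{last} one. Reorganizing using antisymmetry in slots one and two of $\Omega$, all three $\Omega$-terms assemble into $-\frac12(c\otimes c)(1\otimes\sharp)(\Omega)$, where now the map $(c\otimes c)(1\otimes\sharp)$ applies $c$ to the $\Omega^2(M)$-leg (slots one and two of $\Omega$) and $c\circ\sharp$ to the remaining $\Omega^1(M)$-leg (slot three) — i.e. exactly the opposite contraction pattern from the $S$-side, which is why the two contraction operators $(\sharp\otimes 1)$ and $(1\otimes\sharp)$ appear asymmetrically in the statement.

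The main obstacle will be bookkeeping: keeping straight which slot of each three-tensor is being fed to $\sharp$ versus to the $2$-form map $c(\cdot\wedge\cdot)$, tracking the various factors of $\frac12$ and $\frac14$ (note \eqref{eq:clifftwo} has no $\frac14$, so combinatorial factors of $2$ from antisymmetrization must be accounted for explicitly), and using the support conditions on $S$ and $\Omega$ at exactly the right moments so that spurious mixed terms drop out. A clean way to organize it is to fix a local orthonormal frame $\{e_i\}$ adapted to the splitting $\sX_V(M)\oplus\sX_H(M)$, write every tensor in components, and verify the identity on the generators $c(e_j)$; tensoriality over $C^\infty(M)$ then gives the general case. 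I do not expect any conceptual difficulty beyond this—the identity is, after Lemma \ref{l:commcliff} and Proposition \ref{prop:expression-tensor}, essentially a repackaging of the definitions.
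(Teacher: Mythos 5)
Your proposal is correct and follows essentially the same route as the paper: split $\omega = -\wit S + \frac12\wit\Omega$ where $\wit S$ and $\wit\Omega$ are the indicated antisymmetrizations, expand each tensor in simple terms adapted to the vertical/horizontal splitting, exploit the symmetry of $S$ in its first two slots (Proposition \ref{prop:expression-tensor}) and the support conditions on $S$ and $\Omega$, and push everything through the Clifford relations (the cancellation of the mixed $c(Y_k^3)\cdots$ terms in the $\Omega$-part is exactly the "reorganizing" you describe). The only caveat is that the factor of $2$ in front of $c(k^\sharp)$ is produced by the Clifford relation $c(X)c(Y)+c(Y)c(X)=2\inn{X,Y}_M$ applied on both sides of the horizontal generator after symmetrizing $S$ (giving $\tfrac12\cdot 2\cdot 2 = 2$), rather than directly by "two ways the traced index can land"; this is a cosmetic difference and does not affect the substance of your argument.
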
 
\proof
To ease the notation we introduce the elements $\wit S$ and $\wit \Om$ in $\Om^1(M) \ot_{C^\infty(M)} \Om^2(M)$ defined by
\[
\begin{split}
& \wit S(X,Y,Z) := S(X,Y,Z) - S(X,Z,Y) \q \Tex{and} \\
& \wit \Om(X,Y,Z) := \Om(X,Z,Y) - \Om(X,Y,Z) + \Om(Y,Z,X) 
\end{split}
\]
and we thus have that
\[
\om(X,Y,Z) := \om(X)(Y,Z) = - \wit S(X,Y,Z) + \half \wit \Om(X,Y,Z).
\]

We claim that 
\begin{equation}\label{eq:traome}
\begin{split}
& (c \ot c)(\sharp \ot 1)( \wit S) = 2 \big(\Tex{Tr} \ot (c \ci \sharp) \big)(S), \q \Tex{and} \\ 
& (c \ot c)(\sharp \ot 1)(\wit \Om) = - (c \ot c)(1 \ot \sharp)(\Om).
\end{split}
\end{equation}

To prove the first of these identities we write
\[
S = \sum_{i,j,k} \bra{X_i^1} \ot \bra{X_j^2} \ot \bra{X_k^3}
\]
for real vector fields $X_i^1, X_j^2 \in \sX_V(M)$ and $X_k^3 \in \sX_H(M)$, and we notice that the symmetry relation in Proposition \ref{prop:expression-tensor} implies that
\[
S = \half \sum_{i,j,k} \big( \bra{X_i^1} \ot \bra{X_j^2} \ot \bra{X_k^3} + \bra{X_j^2} \ot \bra{X_i^1} \ot \bra{X_k^3} \big).
\]
Using the Clifford relation we then obtain that
\[
\begin{split}
& (c \ot c)(\sharp \ot 1)( \wit S) = \frac{1}{2} \sum_{i,j,k} \big( c(X_i^1) [ c( X_j^2), c(X_k^3)]
+ c(X_j^2) [c(X_i^1), c(X_k^3)] \big) \\
& \q = 2 \sum_{i,j,k} \inn{X_i^1,X_j^2}_M \cd c(X_k^3)
= 2 \big( \Tex{Tr} \ot (c \ci \sharp) \big)(S) \, ,
\end{split}
\]
proving the first identity in \eqref{eq:traome}.

To verify the second identity in \eqref{eq:traome} we write
\[
\Om = \sum_{i,j,k} \bra{Y_i^1} \we \bra{Y_j^2} \ot \bra{Y_k^3}
\]
for real vector fields $Y_i^1, Y_j^2 \in \sX_H(M)$ and $Y_k^3 \in \sX_V(M)$. We then have that
\[
\wit \Om = \sum_{i,j,k} \Big( \bra{Y_j^2} \ot \bra{Y_i^1} \we \bra{Y_k^3} + \bra{Y_k^3} \ot \bra{Y_i^1} \we \bra{Y_j^2} - \bra{Y_i^1} \ot \bra{Y_j^2} \we \bra{Y_k^3} \Big) .
\]
Using the Clifford relation we obtain that
\[
\begin{split}
(c \ot c)(\sharp \ot 1)( \wit \Om) & = \sum_{i,j,k} \Big( 
c(Y_j^2) [c(Y_i^1),c(Y_k^3)] + c(Y_k^3) [ c(Y_i^1), c(Y_j^2)] \\
& \qqq - c(Y_i^1) [ c(Y_j^2),  c(Y_k^3) ] \Big) \\
& = - \sum_{i,j,k} [ c(Y_i^1), c(Y_j^2) ] c(Y_k^3) 
= - (c \ot c)(1 \ot \sharp)(\Om) \, , 
\end{split}
\]
and this ends the proof of the lemma.
\endproof

\section{Factorization in unbounded $KK$-theory}
\label{sect:fact}
In this section we come to the main result of this paper, which is the factorization in unbounded $KK$-theory of the Dirac operator $D_M$ on $M$ in terms of a vertical operator $S$ and the Dirac operator $D_B$ on $B$ for a Riemannian submersion $M \to B$ of compact spin$^{\Tex{c}}$ manifolds. We let $L^2(\sE_M)$ and $L^2(\sE_B)$ denote the Hilbert space completions of the spinor modules $\sE_M$ and $\sE_B$ with respect to the hermitian forms and the measures $\mu_M$ and $\mu_B$ coming from the Riemannian metrics. Our task is then to find a $C^*$-correspondence from $C(M)$ to $C(B)$, together with a self-adjoint and regular unbounded operator $S$ on $X$, such that 
\[
L^2(\sE_M) \cong X \hot_{C(B)} L^2(\sE_B)
\]
and in such a way that the operator $D_M$ corresponds to the tensor sum $S \otimes \ga_B + 1 \otimes_\nabla D_B$ for some metric connection $\nabla$ on $X$. 

First, let us translate the results of the previous section on connections on the vertical spinor bundle on $M$ to a Hilbert $C^*$-module over $C(B)$. In fact, since $\sE_V$ is a $C^\infty(M)$-module, it becomes a $C^\infty(B)$-module (denoted $\cc X$) using the pullback map $\pi^* : C^\infty(B) \to C^\infty(M)$. Moreover, $\cc X$ can be equipped with a $C^\infty(B)$-valued inner product $\langle \cdot,\cdot\rangle$ defined by integration along the fibers:
\begin{align*}
\langle \phi_1,\phi_2 \rangle_{\cc X} (b) := \int_{F_b} \langle \phi_1,\phi_2 \rangle_{\sE_V} \, d\mu_{F_b} 
\end{align*}
using the measures coming from the Riemannian metrics on the Riemannian submanifolds of $M$: $F_b := \pi^{-1}(\{b\})$, $b \in B$. We define the Hilbert $C^*$-module $X$ to be the completion of $\cc X$ in the norm coming from this inner product and the $C^*$-norm on $C(B)$. There is a left action of $C(M)$ on $X$ (coming from the left action of $C^\infty(M)$ on $\sE_V$) and this gives $X$ the structure of a $C^*$-correspondence from $C(M)$ to $C(B)$. Moreover, $X$ is $\zz/2\zz$-graded with grading operator $\ga_X$ induced by the grading operator $\ga_V$ on $\sE_V$.
\medskip

In order to relate the interior tensor product $X \hot_{C(B)} L^2(\sE_B)$ to $L^2(\sE_M)$ we fix coordinate charts $(V,\phi)$ on $B$ and $(W,\psi)$ on a model fiber $F$ together with a local trivialization
\[
\rho : \pi^{-1}(V) \to V \times F.
\]
Putting $U := \rho^{-1}( V \times W)$ we have a coordinate chart $(U,\si)$ on $M$ with
\[
\si := (\phi \ti \psi) \ci \rho : U \to \rr^{\Tex{dim}(B) + \Tex{dim}(F)} .
\]
We refer to such a chart as a \emph{fibration chart}. Combining the fibration chart $(U,\si)$ with the Riemannian metric we obtain the positive invertible matrix of smooth functions
\[
g : U \to \Tex{GL}_{\Tex{dim}(M)}(\rr)_+ \qq g_{ij} := \inn{ \pa/\pa \si_i, \pa/\pa \si_j}_M.
\]
Furthermore, letting $Q : \rr^{\Tex{dim}(M)} \to \rr^{\Tex{dim}(M)}$ denote the projection $Q : (t_1,\ldots,t_{\Tex{dim}(M)}) \mapsto (0,\ldots,0,t_{\Tex{dim}(B) + 1},\ldots,t_{\Tex{dim}(M)})$ onto the last $\Tex{dim}(F)$ copies of $\rr$, we have the following invertible positive matrix of smooth functions:
\[
Q g Q : U \to \Tex{GL}_{\Tex{dim}(F)}(\rr)_+.
\]
We may also combine the coordinate chart $(V,\phi)$ with the Riemannian metric on the base obtaining the invertible positive matrix of smooth functions:
\[
h : V \to \Tex{GL}_{\Tex{dim}(B)}(\rr)_+.
\]
By taking determinants, we thus obtain the three positive functions
\[
\Tex{det}(g) \, , \, \, \Tex{det}( h) \circ \pi \, \Tex{ and } \, \, \Tex{det}( Q g Q ) : U \to (0,\infty).
\]
%

\begin{lma}\label{l:indepchart}
We have the identity
\[
\Tex{det}(g) = \big( \Tex{det}(h) \ci \pi \big) \cdot \Tex{det}( Q g Q).
\]
\end{lma}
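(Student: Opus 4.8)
The plan is to work locally in a fibration chart $(U,\sigma)$ and compute the matrix $g = (g_{ij})$ of the Riemannian metric on $M$ in block form adapted to the splitting $\sX(M) \cong \sX_V(M) \oplus \sX_H(M)$. The key structural input is that $\pi : M \to B$ is a Riemannian submersion, so $d\pi$ restricts to an isometry $(\ker d\pi)^\perp \to \sX(B) \otimes_{C^\infty(B)} C^\infty(M)$. In the fibration chart, the coordinates $\sigma$ split as $\sigma = (\phi \circ \text{(base)}, \psi \circ \text{(fiber)})$, so the coordinate vector fields $\partial/\partial \sigma_1, \ldots, \partial/\partial\sigma_{\dim(B)}$ project under $d\pi$ onto a basis of $\sX(B)$ pulled back to $U$, while $\partial/\partial\sigma_{\dim(B)+1}, \ldots, \partial/\partial\sigma_{\dim(M)}$ span $\sX_V(M)$ (these are tangent to the fibers by construction of the trivialization).

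First I would write $g$ as a $2\times 2$ block matrix with respect to this ordering of coordinate vector fields,
\[
g = \begin{pmatrix} A & B \\ B^t & D \end{pmatrix},
\]
where $D = QgQ$ restricted to the vertical block is exactly the metric on the fibers, $A$ is the ``horizontal-horizontal'' block, and $B$ measures the failure of the coordinate horizontal fields to be genuinely horizontal (i.e.\ $B$ records the $\sX_H(M)$-components pairing against $\sX_V(M)$). Then $\det(g) = \det(D) \cdot \det(A - B D^{-1} B^t)$ by the Schur complement formula. The content of the lemma is then twofold: that $\det(D) = \det(QgQ)$ (immediate, since $QgQ$ is precisely the vertical block $D$ padded with zeros in a way that does not affect the determinant of the nontrivial part), and that the Schur complement $A - BD^{-1}B^t$ equals the matrix of the induced metric on $(\ker d\pi)^\perp$ expressed in the basis obtained by orthogonally projecting the coordinate horizontal fields. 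By the Riemannian submersion property this induced metric is isometric to $h \circ \pi$ expressed in the $\phi$-coordinates, whence $\det(A - BD^{-1}B^t) = \det(h)\circ\pi$.

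The main obstacle is the second point: matching the Schur complement with $\det(h)\circ\pi$. Concretely, one must check that projecting the coordinate vector field $\partial/\partial\sigma_i$ ($i \le \dim(B)$) onto $(\ker d\pi)^\perp$ via $P^\perp = 1-P$ does not change its image under $d\pi$ (which is clear, since $d\pi \circ P = 0$), so that $d\pi$ sends the projected frame $\{P^\perp(\partial/\partial\sigma_i)\}$ isometrically to the frame $\{\partial/\partial\phi_i \otimes 1\}$ of $\sX(B)\otimes C^\infty(M)$; the Gram matrix of the latter with respect to $\inn{\cd,\cd}_B \circ \pi$ is exactly $h \circ \pi$. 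Since $P^\perp$ acts as the identity on the $d\pi$-image and the Schur complement is precisely the Gram matrix of $\{P^\perp(\partial/\partial\sigma_i)\}$ in $\inn{\cd,\cd}_M$ (this is the standard linear-algebra interpretation of the Schur complement as the inner product restricted to the orthogonal complement of the span of the last block), the isometry gives $A - BD^{-1}B^t = h\circ\pi$ as matrices, and taking determinants finishes the proof. I would also note that both sides of the claimed identity are, a priori, independent of the choice of fibration chart — consistent with Lemma \ref{l:indepchart} being stated globally — but this consistency is automatic once the local identity is established.
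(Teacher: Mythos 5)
Your proof is correct and is essentially the same as the paper's: the paper constructs an explicit upper-triangular matrix $O$ with unit diagonal that sends $\partial/\partial\sigma_i$ to $(1-P)(\partial/\partial\sigma_i)$ for $i \le \dim(B)$ and leaves the vertical frame fixed, which block-diagonalizes $g$ and yields $\det(g) = (\det(h)\circ\pi)\cdot\det(QgQ)$; this is exactly the block-elimination underlying the Schur-complement determinant formula you invoke, and your identification of the Schur complement with the Gram matrix of the projected frame $\{(1-P)(\partial/\partial\sigma_i)\}$ and hence with $h\circ\pi$ is the same geometric input.
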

\begin{proof}
We define the upper triangular matrix of smooth functions $O : U \to \Tex{GL}_{\Tex{dim}(M)}(\rr)$ by putting 
\[
O_{ij} := \fork{cc}{
1 & i = j \\
- d\si_j( P(\pa/\pa \si_i)) & i \leq \Tex{dim}(B) < j \\
0 & \Tex{elsewhere}
} .
\]
For $i \in \{1,\ldots, \Tex{dim}(M)\}$ we compute that
\[
\sum_{j = 1}^{\Tex{dim}(M)} O_{ij} \cd \pa/\pa \si_j
= \fork{cc}{
\pa/\pa \si_i &  i > \Tex{dim}(B) \\
(1 - P)(\pa/\pa \si_i) & i \leq \Tex{dim}(B)
} \, , 
\]
and we thus have that
\[
(O g O^t)_{kl} = 
\fork{ccc}{ g_{kl} & k,l > \Tex{dim}(B) \\
\inn{ (1 - P) \pa/\pa \si_k, (1 - P) \pa/\pa \si_l} & k,l \leq \Tex{dim}(B) \\
0 & \Tex{elsewhere}
} \, . 
\]
Using that $\pi : M \to B$ is a Riemannian submersion we conclude that $O g O^t = (h \ci \pi) \op Q g Q$. Therefore, since $O$ is upper-triangular and $1$ on the diagonal, we obtain that
\[
\Tex{det}(g) = \Tex{det}( O g O^t) = \big( \Tex{det}( h) \ci \pi \big) \cd \Tex{det}( Q g Q) \, , 
\]
proving the lemma.
\end{proof}



\begin{prop}
The left $C^\infty(M)$-module map 
\begin{align*}
V : \cc X \otimes_{C^\infty(B)} \sE_B &\to \sE_M \\
V : ( \bra{\xi} \ot s) \ot r &\mapsto \big( \ket{r \ot 1} \bra{\xi} \big)(s)
\end{align*}
defined for $\xi \in \sE_H$, $s \in \sE_M$ and $r \in \sE_B$, induces a unitary isomorphism 
\[
V : X \hot_{C(B)} L^2(\sE_B) \to L^2(\sE_M)
\]
of $\zz/2\zz$-graded Hilbert spaces.
\end{prop}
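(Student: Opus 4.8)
The strategy is to verify that the map $V$ has all the formal properties that force it to extend to a unitary on the completions: $C^\infty(M)$-linearity (already built into the definition), compatibility with gradings, compatibility with the hermitian forms (isometry on the algebraic level), and surjectivity with dense range. Once $V$ is shown to be an isometry of $C^\infty(M)$-modules from $\cc X \otimes_{C^\infty(B)} \sE_B$ onto $\sE_M$ intertwining the inner products, density of $\sE_M$ in $L^2(\sE_M)$ and of $\cc X \otimes_{C^\infty(B)} \sE_B$ in $X \hot_{C(B)} L^2(\sE_B)$ lets us pass to the completions. The compatibility with the $\zz/2\zz$-gradings $\ga_H \ot \ga_V$ on the source and $\ga_M$ on the target is immediate from the definitions of $\ga_V$ and the identifications used in Proposition \ref{prop:spinors-vert-hor}.

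First I would recall that by Proposition \ref{prop:spinors-vert-hor} we already have an isometric isomorphism of $\zz/2\zz$-graded $C^\infty(M)$-modules $\sE_H \ot_{C^\infty(M)} \sE_V \cong \sE_M$, and by construction $X = \cc X$ is $\sE_V$ viewed as a $C^\infty(B)$-module via $\pi^*$. So the algebraic map $V$ is essentially the composite
\[
\cc X \otimes_{C^\infty(B)} \sE_B \xrightarrow{\ \sim\ } \sE_V \otimes_{C^\infty(M)} (\sE_B \otimes_{C^\infty(B)} C^\infty(M)) = \sE_V \otimes_{C^\infty(M)} \sE_H \xrightarrow{\ \sim\ } \sE_M,
\]
where the first arrow uses $\sE_B \otimes_{C^\infty(B)} C^\infty(M) = \sE_H$ and the second is (the flip of) Proposition \ref{prop:spinors-vert-hor}. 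The only genuinely new content is that $V$ respects the \emph{$L^2$} inner products rather than just the $C^\infty(M)$-valued ones, and this is exactly where Lemma \ref{l:indepchart} enters: the inner product on $X \hot_{C(B)} L^2(\sE_B)$ involves integrating $\langle\cdot,\cdot\rangle_{\sE_V}$ over the fibers $F_b$ against $\mu_{F_b}$ and then integrating over $B$ against $\mu_B$, whereas the inner product on $L^2(\sE_M)$ integrates $\langle\cdot,\cdot\rangle_{\sE_M}$ against $\mu_M$ directly. Working in a fibration chart $(U,\si)$, the measure $\mu_M$ has density $\sqrt{\Tex{det}(g)}$, $\mu_B$ has density $\sqrt{\Tex{det}(h)}$, and $\mu_{F_b}$ has density $\sqrt{\Tex{det}(QgQ)}$; Lemma \ref{l:indepchart} gives precisely the factorization $\sqrt{\Tex{det}(g)} = \sqrt{\Tex{det}(h)\ci\pi}\cdot\sqrt{\Tex{det}(QgQ)}$ that makes Fubini identify the iterated fiber-then-base integral with the total integral over $M$. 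Combined with the algebraic isometry for the $C^\infty(M)$-valued forms from Proposition \ref{prop:spinors-vert-hor}, this shows $V$ is isometric for the $L^2$-inner products on a dense subset.

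For surjectivity onto $L^2(\sE_M)$ it suffices to note that the algebraic $V$ is already onto $\sE_M$ (again by Proposition \ref{prop:spinors-vert-hor}, since $\ket{r\ot 1}\bra{\xi}$ ranges over enough of $\End_{C^\infty(M)}(\sE_H) \cong \Cl_H(M)$ to recover all of $\sE_M$ from $\sE_H^* \ot_{\Cl_H(M)} \sE_M$), and $\sE_M$ is dense in $L^2(\sE_M)$. Hence $V$ extends to a unitary on the Hilbert space completions, and it is graded because the algebraic map is. I would then finish by remarking that a partition-of-unity argument reduces the global isometry statement to the local computation in fibration charts, so that only Lemma \ref{l:indepchart} and the pointwise statement of Proposition \ref{prop:spinors-vert-hor} are needed. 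The main obstacle is bookkeeping the three measures consistently across overlapping fibration charts and confirming that the local densities glue to the global Riemannian measures on $M$, $B$, and the fibers $F_b$; once the chart-wise identity of densities is in hand (which is Lemma \ref{l:indepchart}), the rest is a routine Fubini-and-partition-of-unity argument with no further analytic subtlety.
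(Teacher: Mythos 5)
Your proposal is correct and takes essentially the same route as the paper. You identify the two ingredients the paper uses: the algebraic module isomorphism (the paper invokes Lemma~\ref{l:horiclif} and finite generation/projectivity of $\sE_B$, which is the content that also underlies Proposition~\ref{prop:spinors-vert-hor}), and the factorization of Riemannian densities in Lemma~\ref{l:indepchart} combined with Fubini, applied in fibration charts after reducing to compactly supported sections, to identify the iterated fiber-then-base $L^2$-integral with integration against $\mu_M$.
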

\proof
Using that $\Tex{End}_{C^\infty(M)}(\sE_H) \cong \Cl_H(M)$ (see Lemma \ref{l:horiclif}) and the fact that $\sE_B$ (and hence also $\sE_H$) are finitely generated projective modules, it follows that $V : \cc X \ot_{C^\infty(B)} \sE_B \to \sE_M$ is an isomorphism of $\zz/2\zz$-graded left modules over $C^\infty(M)$. By a density argument we obtain the result of the proposition if we can establish that
\begin{equation}\label{eq:isomvee}
\begin{split}
& \binn{ V( \bra{\xi_1} \ot s_1 \ot r_1), V( \bra{\xi_2} \ot s_2 \ot r_2) }_{L^2(\sE_M)}  \\
& \q = \binn{ \bra{\xi_1} \ot s_1 \ot r_1, \bra{\xi_2} \ot s_2 \ot r_2 }_{X \hot_{C(B)} L^2(\sE_B)} 
\end{split}
\end{equation}
for all $\xi_1,\xi_2 \in \sE_H$, $s_1,s_2 \in \sE_M$ and $r_1,r_2 \in \sE_B$. To ease the notation we put
\[
T := \ket{\xi_1}\bra{\xi_2} \in \Tex{End}_{C^{\infty}(M)}(\sE_H) \su \Tex{End}_{C^\infty(M)}( \sE_M) \, , 
\]
where we suppress the identifications $\Tex{End}_{C^{\infty}(M)}(\sE_H) \cong \Cl_H(M) \su \Tex{End}_{C^\infty(M)}( \sE_M)$. Moreover, without loss of generality, we assume that $\Tex{supp}(s_1) \su U$. 

We expand on the right-hand side of \eqref{eq:isomvee}: 
\[
\begin{split}
& \binn{ \bra{\xi_1} \ot s_1 \ot r_1, \bra{\xi_2} \ot s_2 \ot r_2 }_{X \hot_{C(B)} L^2(\sE_B)} \\
& \q = \int_B \inn{r_1,r_2}_{\sE_B} \cd \binn{\bra{\xi_1} \ot s_1, \bra{\xi_2} \ot s_2)}_{\sX} \, d\mu_B  .
\end{split}
\]
Now, for each $b \in \pi(U)$ we compute that
\[
\begin{split}
& \inn{\bra{\xi_1} \ot s_1, \bra{\xi_2} \ot s_2}_{\sX}(b) \\
& \q = \int_{\psi(W)}\Big( \inn{s_1, T(s_2)}_{\sE_M} \cd \Tex{det}(QgQ)^{1/2} \Big)
\big( \si^{-1}(\phi(b), x ) \big) \, dm_{\Tex{dim}(F)}(x) \, , 
\end{split}
\]
where $m_{\Tex{dim}(F)}$ is Lebesgue measure on $\rr^{\dim(F)}$. Moreover, $\inn{s_1, T(s_2)}(b) = 0$ for all $b \notin \pi(U)$. Using Lemma \ref{l:indepchart} we thus have that 
\[
\begin{split}
& \binn{ \bra{\xi_1} \ot s_1 \ot r_1, \bra{\xi_2} \ot s_2 \ot r_2 }_{X \hot_{C(B)} L^2(\sE_B)} \\
& \q = \int_{\phi(V) \ti \psi(W)} 
\Big( \big( \inn{ r_1, r_2 }_{\sE_B} \cd \Tex{det}(h)^{1/2} \big) \ci \pi \Big)\big( \si^{-1}(x) \big) \\
& \qqq \cd \Big(  \inn{s_1,T(s_2)}_{\sE_M} \cd \Tex{det}( Qg Q)^{1/2} \Big)\big( \si^{-1}(x) \big) 
\, dm_{\Tex{dim}(M)}(x)  \\
& \q = \int_M ( \inn{r_1,r_2}_{\sE_B} \ci \pi) \cd \inn{s_1,T(s_2)}_{\sE_M} \, d \mu_M \, , 
\end{split}
\]
and the result of the proposition can be obtained by noting that
\[
\begin{split}
& \int_M (\inn{r_1,r_2}_{\sE_B} \ci \pi)\cd \inn{s_1,T(s_2)}_{\sE_M} \, d \mu_M \\
& \q = 
\binn{ V( \bra{\xi_1} \ot s_1 \ot r_1), V(\bra{\xi_2} \ot s_2 \ot r_2) }_{L^2(\sE_M)} \, .\qedhere
\end{split}
\]
\endproof

We will now use the connection $\nabla^{\sE_V}$ constructed in Proposition \ref{prop:spin-connections} to define an unbounded operator on $X$. 

\begin{lma}\label{l:symmetry}
The following local expression defines an odd symmetric unbounded operator $S_0 : \cc X \to X$:
\[
S_0(\xi) = i  \sum_{j=1}^{\dim (F)} c_V(e_j) \nabla^{\sE_V}_{e_j}(\xi) 
\]
where $\{ e_j\}$ is a local orthonormal frame for $\sX_V(M)$ consisting of real vector fields. 
\end{lma}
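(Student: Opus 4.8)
### Proof proposal

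The plan is to verify the two assertions — that $S_0$ is well-defined and odd, and that it is symmetric with respect to the $C(B)$-valued inner product on $X$ — essentially by unwinding the definitions and using the properties established for $\nabla^{\sE_V}$ in Proposition \ref{prop:spin-connections}, together with a standard divergence-type argument on each fiber.

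First I would check independence of the local orthonormal frame. If $\{e_j\}$ and $\{e_j'\}$ are two local orthonormal frames for $\sX_V(M)$ related by an orthogonal matrix of smooth functions, then the expression $\sum_j c_V(e_j)\nabla^{\sE_V}_{e_j}$ transforms correctly precisely because $c_V$ is linear and $\nabla^{\sE_V}$ is a connection; the extra terms coming from differentiating the transition matrix are killed by the antisymmetry of the orthogonal Lie algebra against the symmetric tensor $c_V(e_i)\langle e_i,e_j\rangle$, exactly as in the classical computation that makes the Dirac operator globally defined. Hence the local expressions patch to a genuine operator $S_0 : \cc X \to X$. That $S_0$ is odd is immediate: $c_V$ maps $\sX_V(M)$ to odd endomorphisms of $\sE_V$ (it is a Clifford action), while $\nabla^{\sE_V}_{e_j}$ is even since $\nabla^{\sE_V}$ commutes with $\ga_V$ by Proposition \ref{prop:spin-connections}; the product of an odd and an even operator is odd. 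The factor $i$ is there to make $S_0$ symmetric rather than skew.

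For symmetry, I would take $\xi,\eta \in \cc X$ with support contained in a single fibration chart $(U,\sigma)$ (the general case follows by a partition of unity), and compute $\langle S_0\xi,\eta\rangle_{\cc X}(b) - \langle \xi, S_0\eta\rangle_{\cc X}(b)$ by integrating the pointwise identity over the fiber $F_b$. Using that $\nabla^{\sE_V}$ is a metric connection and $c_V$ is a $*$-homomorphism with $c_V(e_j)^* = c_V(e_j)$ (real vector fields act self-adjointly), the pointwise expression $\langle S_0\xi,\eta\rangle_{\sE_V} - \langle \xi, S_0\eta\rangle_{\sE_V}$ becomes, after cancelling the terms where $c_V(e_j)$ is moved across the inner product, a sum of two contributions: the derivative terms $i\sum_j e_j\big(\langle c_V(e_j)\xi,\eta\rangle_{\sE_V}\big)$ and a term involving $i\sum_j c_V(\nabla^V_{e_j}(e_j))$ that arises from differentiating the frame inside the Clifford action (using that $\nabla^{\sE_V}$ is a Clifford connection for $\nabla^V$). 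Together these assemble into the fiberwise divergence of the vertical vector field $Z_{\xi,\eta} := \sum_j \langle c_V(e_j)\xi,\eta\rangle_{\sE_V}\, e_j$ (up to the mean-curvature correction coming from the fact that $\nabla^V$ is not the Levi-Civita connection of the fiber — this is where the tensor $S$ and the integration measure $d\mu_{F_b}$ interact). Integrating over the closed fiber $F_b$ and invoking the divergence theorem makes this vanish, giving $\langle S_0\xi,\eta\rangle_{\cc X} = \langle \xi, S_0\eta\rangle_{\cc X}$.

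The main obstacle I anticipate is the bookkeeping in this last step: one must be careful that the connection $\nabla^{\sE_V}$ — which involves the curvature correction $\tfrac14 c(\omega(X))$ of Proposition \ref{prop:spin-connections} — still produces a clean divergence once restricted to vertical directions, and that the relevant volume element $d\mu_{F_b}$ on the fiber is the one for which $\sum_j \nabla^V_{e_j}(e_j)$ captures the correct divergence (equivalently, that the mean curvature $k$ of \eqref{eq:mean-curv} does not actually contribute because $\omega(e_j)$ evaluated on vertical arguments, and the trace over vertical indices, conspire to cancel — compare Lemma \ref{lma:mean-curv}). Once one confirms that the vertical part of the correction term $\tfrac14 c(\omega(e_j))$ is skew-adjoint after summing against $c_V(e_j)$ (so it drops out of the symmetry computation), the argument reduces to the textbook fact that a Dirac-type operator built from a metric Clifford connection on a closed manifold is symmetric, applied fiberwise.
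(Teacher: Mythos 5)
Your proof is correct, and it is genuinely more invariant than the paper's. The paper works entirely inside a fibration chart $(U,\sigma)$: it rewrites $S_0$ in terms of the vector fields $X_l = \sum_k (QgQ)^{-1}_{kl}\,\partial/\partial\sigma_k$, applies the det/log identity to the fiber volume density $\det^{1/2}(QgQ)$, and then reduces symmetry to the coordinate identity
\[
\sum_{l>\dim B} \nabla^V_{\partial/\partial\sigma_l}(X_l)
= \tfrac12 \sum_{l>\dim B}\Tr\big(\partial/\partial\sigma_l(QgQ)\,(QgQ)^{-1}\big)\,X_l,
\]
which it verifies by Koszul's formula. That coordinate identity is exactly your fiberwise divergence statement in disguise: after using that $\nabla^{\sE_V}$ is a metric Clifford connection, the defect $\langle S_0\xi,\eta\rangle_{\sE_V}-\langle\xi,S_0\eta\rangle_{\sE_V}$ is $-i$ times the divergence of the vertical vector field $Z_{\xi,\eta}=\sum_j\langle\xi,c_V(e_j)\eta\rangle\,e_j$ with respect to $\nabla^V$, and this integrates to zero on the closed fiber $F_b$. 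Your route is cleaner, but buys the same conclusion.

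One clarification worth making: the ``main obstacle'' you flag at the end does not actually arise. Once Proposition \ref{prop:spin-connections} is in hand, you know that $\nabla^{\sE_V}$ is a metric Clifford connection, and this single fact already absorbs the correction term $\tfrac14 c(\omega(\cdot))$ — you never need to analyze it separately, nor check its skew-adjointness (though it is indeed skew-adjoint, being the Clifford image of a real two-form). Likewise, there is no mean-curvature mismatch to worry about: for vertical $X,Y$ the connection $\nabla^V_X(Y)=P\nabla^M_X(Y)$ is precisely the Levi-Civita connection of the fiber $F_b$ by the Gauss formula, and $d\mu_{F_b}$ is its Riemannian volume form, so the divergence you obtain is the intrinsic one and the divergence theorem applies directly. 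The mean curvature $k$ only involves \emph{horizontal} differentiation of vertical fields and plays no role in the vertical operator $S_0$; it only surfaces later, in the connection $\nabla^{\cc X}$ used to lift $D_B$.
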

\proof
We show that $S_0$ is symmetric, thus that
\begin{equation}\label{eq:symmetry}
\inn{S_0(\phi_1),\phi_2}_{\cc X} = \inn{\phi_1, S_0(\phi_2)}_{\cc X} \q \phi_1,\phi_2 \in \cc X \, .
\end{equation}
Without loss of generality, assume that $\Tex{supp}(\phi_1), \Tex{supp}(\phi_2) \su U$ where $(U, \si)$ is a fibration chart. We then notice that
\[
S_0(\phi_1) = i  \sum_{k,l > \dim(B)} c_V\big( (Q g Q)^{-1}_{kl} \pa/\pa \si_l \big) \Na^{\sE_V}_{\pa/ \pa \si_l}(\phi_1) \, , 
\]
and similarly for $\phi_2$. To ease the notation, we define the vertical vector fields
\[
X_l := \sum_{k > \dim(B)} (Q g Q)^{-1}_{kl} \pa/\pa \si_l \q l \in \{ \dim(B) + 1,\ldots, \dim(M) \} \, .
\]
Let $b \in \pi(U)$. Using that $\Na^{\sE_V}$ is a metric Clifford connection (see Proposition \ref{prop:spin-connections}) we obtain that
\[
\begin{split}
& \inn{S_0(\phi_1),\phi_2}_{\cc X}(b) 
= - i \sum_{l > \dim(B)}\int_{F_b} \inn{ c_V(X_l) \Na^{\sE_V}_{\pa/\pa \si_l}(\phi_1), \phi_2}_{\sE_V} \, d \mu_{F_b} \\
& \q = \inn{\phi_1, S_0(\phi_2)}_{\cc X}(b)
+ i \sum_{l > \dim(B)} \int_{F_b} \inn{\phi_1, c_V( \Na^V_{\pa/\pa \si_l}(X_l))(\phi_2)}_{\sE_V} \, d \mu_{F_b} \\
& \qqq - i \sum_{l > \dim(B)} \int_{F_b} \pa/\pa \si_l\big( \inn{\phi_1, c_V(X_l)(\phi_2)}_{\sE_V} \big) \, d \mu_{F_b}.
\end{split}
\]
Using the det/log relationship 
\[
\pa/\pa \si_l( \Tex{det}(QgQ)^{-1/2}) = -\frac{1}{2}\Tex{Tr}\big( \pa/\pa \si_l( QgQ) \cd (QgQ)^{-1} \big) \cd \Tex{det}(QgQ)^{-1/2}
\]
we then reduce the proof of \eqref{eq:symmetry} to a verification of the identity
\begin{equation}\label{eq:conntrac}
\sum_{l > \dim(B)} \Na^V_{\pa/\pa \si_l}(X_l) = \frac{1}{2}\sum_{l > \dim(B)} \Tex{Tr}\big( \pa/\pa \si_l( QgQ) \cd (QgQ)^{-1} \big) \cd X_l
\end{equation}
of vertical vector fields. However, using Koszul's formula for the Levi-Civita connection on $M$, we see that
\[
\begin{split}
& \sum_{l > \dim(B)} \Na^V_{\pa/\pa \si_l}(X_l)
= \sum_{j,l > \dim(B)} \inn{ \Na^M_{\pa/\pa \si_l}(X_l), X_j}_M \cd \pa/\pa \si_j \\
& \q = \frac{1}{2}\sum_{j,l > \dim(B)} \big( \pa/\pa \si_l( \inn{X_l,X_j}_M ) - \inn{ [X_l,X_j], \pa/\pa \si_l}_M \big) \cd \pa/\pa \si_j \\
& \q = \frac{1}{2}\sum_{l > \dim(B)} \Tex{Tr}\big( \pa/\pa \si_l( QgQ) \cd (QgQ)^{-1} \big) \cd X_l \, ,
\end{split}
\]
thus proving \eqref{eq:conntrac} and thereby also that $S_0$ is symmetric. 
%
\endproof

We let $S : \dom(S) \to X$ denote the closure of $S_0 : \cc X \to X$. The vertical part of our geometric data is then encoded in the following:

\begin{prop}\label{p:unboukas}
The triple $(C^\infty(M),X,S)$ is an even unbounded Kasparov module from $C(M)$ to $C(B)$ with grading operator $\ga_X : X \to X$.
\end{prop}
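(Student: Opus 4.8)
The plan is to verify the three defining properties of an even unbounded Kasparov module: (i) $S$ is self-adjoint and regular on $X$; (ii) the resolvents $(S \pm i)^{-1}$ act as compact operators on $X$; and (iii) the set of $a \in C^\infty(M)$ such that $a \cdot \dom(S) \subseteq \dom(S)$ and $[S,a]$ extends to an adjointable operator on $X$ is dense in $C(M)$. That $S$ is odd with respect to $\ga_X$ and symmetric has already been established in Lemma \ref{l:symmetry}, so the content here is self-adjointness, regularity, compact resolvents, and boundedness of commutators.

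\medskip

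For (i) and (ii), I would pass to the fibers. The operator $S_0$ is, fiber by fiber, the Dirac-type operator $i \sum_j c_V(e_j) \nabla^{\sE_V}_{e_j}$ on the closed Riemannian manifold $F_b = \pi^{-1}(\{b\})$ acting on sections of the vertical Clifford module $\sE_V|_{F_b}$. Since each $F_b$ is a compact manifold without boundary and $\nabla^{\sE_V}$ restricts to a metric Clifford connection along the fibers (Proposition \ref{prop:spin-connections}), each such fiberwise operator is essentially self-adjoint with discrete spectrum and compact resolvent. The standard way to globalize this to a statement about the Hilbert module $X$ over $C(B)$ is to invoke the local-global machinery for families of elliptic operators: one uses that in a fibration chart $(U,\si)$ the operator $S$ has the form of a first-order elliptic operator with smooth coefficients depending continuously on the base point, and then appeals to the theorem of Pierrot, or to the treatment in Ebert's or Kasparov's work on families of Dirac operators, that a continuous family of self-adjoint elliptic operators on the fibers of a fiber bundle with closed fibers yields a self-adjoint regular operator on the Hilbert module of $L^2$-sections with locally compact resolvent. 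Concretely I would (a) check self-adjointness and regularity by exhibiting a dense domain on which $S \pm i$ are surjective, using a partition of unity subordinate to fibration charts together with the fiberwise ellipticity; and (b) deduce compactness of $(S \pm i)^{-1}$ from the fact that $C(B)$ is unital, so "locally compact" is "compact", combined with fiberwise Rellich compactness, again patched via a partition of unity.

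\medskip

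For (iii), the dense subalgebra is $C^\infty(M)$ itself. For $a \in C^\infty(M)$ and $\xi$ in the core $\cc X$, the Leibniz rule for $\nabla^{\sE_V}$ gives $S_0(a\xi) - a S_0(\xi) = i\sum_j c_V(e_j)\, e_j(a)\, \xi$, which is multiplication by the vertical vector field $\sum_j e_j(a) e_j$ composed with the bounded endomorphism $c_V$ — hence an adjointable operator on $X$ with norm controlled by $\|\dd_V a\|_\infty$, the sup-norm of the vertical differential of $a$. In particular $a \cdot \cc X \subseteq \dom(S)$, and one checks that this persists to $a \cdot \dom(S) \subseteq \dom(S)$ by a standard approximation argument using that $\cc X$ is a core. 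Since $C^\infty(M)$ is dense in $C(M)$, condition (iii) holds.

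\medskip

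I expect the main obstacle to be item (i)--(ii): rigorously establishing self-adjointness, regularity, and compactness of the resolvent for $S$ as an operator on the Hilbert $C(B)$-module $X$, as opposed to for each fiberwise operator separately. The fiberwise statements are classical, but transferring them to the module setting requires either citing the appropriate local-global principle for regular operators (e.g.\ via the work on unbounded $KK$-cycles from families of Dirac operators) or carefully gluing fiberwise parametrices with a partition of unity and verifying the error terms are relatively compact; the bookkeeping with the fibration charts and the determinant factor from Lemma \ref{l:indepchart} (which governs the fiber measures $\mu_{F_b}$) is where the care is needed.
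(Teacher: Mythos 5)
Your proposal is correct and follows essentially the same route as the paper: Lemma~\ref{l:symmetry} gives oddness and symmetry, the heavy lifting (self-adjointness, regularity, local compactness of the resolvent for the Hilbert $C(B)$-module $X$) is delegated to a local–global principle for families of fibrewise elliptic operators (the paper cites \cite{H16,HK16} where you cite Pierrot/Ebert/Kasparov), and the remaining concrete verification is that the principal symbol of $S_0$ is invertible on nonzero vertical covectors, which both you and the paper obtain from the local formula $[S_0,f] = i\sum_j c_V(e_j)\,e_j(f)$, i.e.\ $\si(\om) = c_V((\om\circ P)^\sharp)$. The bounded commutator condition you verify explicitly is likewise read off from this same symbol formula.
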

\proof
Let $\si : \Om^1(M) \to \Tex{End}_{C^\infty(M)}(\sE_V)$ denote the symbol of the first order differential operator $S_0 : \sE_V \to \sE_V$. By Lemma \ref{l:symmetry} and \cite{H16,HK16}, it suffices to verify that $\si( \om(x))$ is invertible whenever $x \in M$, $\om \in \Om^1(M)$ and $\om(x) : (T_V M)_x \to \C$ is non-trivial (where $T_V M \to M$ denotes the vertical tangent bundle). But this follows from the local formula $\si(df) = [S_0, f] = i \sum_{j = 1}^{\dim(F)} c_V(e_j) e_j(f)$ which holds for all $f \in C^\infty(M)$. Indeed, this formula implies that $\si( \om) = c_V( (\om \ci P)^\sharp)$ for all $\om \in \Om^1(M)$.
\endproof


Next, consider the Dirac operator $D_B : \dom(D_B) \to L^2(\sE_B)$. It is defined as the closure of the unbounded operator
\[
(D_B)_0 = i  \sum_{\al = 1}^{\dim (B)} c( f_\alpha) \nabla^{\sE_B}_{f_\alpha} : \sE_B \to L^2(\sE_B)
\]
defined locally for any local orthonormal frame $\{ f_\al\}$ for $\sX(B)$ consisting of real vector fields.

The horizontal part of our geometric data can then be expressed by saying that $(C^\infty(B),L^2(\sE_B),D_B)$ is an even spectral triple with grading operator $\ga_B : L^2(\sE_B) \to L^2(\sE_B)$.

In order to form the unbounded Kasparov product of the vertical and the horizontal components we need to lift the Dirac operator $D_B$ to an unbounded selfadjoint operator on the Hilbert space $X \hot_{C(B)} L^2(\sE_B)$. To carry this out, we need a metric connection on the Hilbert $C^*$-module $X$.

We let $\Om^1_{\Tex{cont}}(B)$ denote the $C^*$-correspondence from $C(B)$ to $C(B)$ defined as the completion of the smooth form $\Om^1(B)$ with respect to (the norm coming from) the $C(B)$-valued inner product $\inn{\om_1,\om_2} := \inn{\om_1^\sharp,\om_2^\sharp}_B$.

\begin{defn}
The metric connection $\nabla^{\cc X} : \cc X \to X \hot_{C(B)} \Omega^1_{\Tex{cont}}(B)$ is defined for $Z \in \sX(B)$ by
\[
\nabla_Z^{\cc X} (\xi) = \nabla_{Z_H}^{\sE_V} (\xi) + \frac{1}{2} k(Z_H) \cdot \xi
\]
with $k(Z_H) \in C^\infty(M)$ the mean curvature from \eqref{eq:mean-curv}.
\end{defn}

We need to verify that the linear map $\nabla^{\cc X}$ is indeed a metric connection. To this end, we first establish a local formula for the mean curvature:

\begin{lma}\label{l:locmean}
Let $(V,\phi)$ and $(W,\psi)$ be coordinate charts on $B$ and $F$, resp. and suppose that $\rho : \pi^{-1}(V) \to V \ti F$ is a local trivialization. For any vector field $Z \in \sX(B)$, we have the local formula:
\[
\begin{split}
k( Z_H) 
& := Z_H( \Tex{det}^{1/2}( Qg Q) ) \cdot \Tex{det}^{-1/2}( Qg Q)
+ \sum_{i > \dim(B)}^{\dim(M)} \pa/\pa \si_i \big( d \si_i( Z_H) \big) \, , 
\end{split}
\]
where $(U,\si)$ is the coordinate chart given by $U := \rho^{-1}(V \ti W)$, $\si := (\phi \ti \psi) \ci \rho$.
\end{lma}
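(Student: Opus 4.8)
The statement is a purely local computation that unpacks the definition of the mean curvature $k = (\Tr \otimes 1)(S)$ in a fibration chart, so the plan is to evaluate $(\Tr \otimes 1)(S)$ against the horizontal lift $Z_H$ using the tangible formula for $S$ from Proposition \ref{prop:expression-tensor}. First I would fix the fibration chart $(U,\si)$ and pass to a local orthonormal frame $\{e_j\}_{j>\dim(B)}$ for $\sX_V(M)$ consisting of real vector fields, writing $e_j = \sum_{k>\dim(B)} a_{jk}\, \pa/\pa\si_k$ for a matrix $a$ of smooth functions on $U$; note that orthonormality forces $a (QgQ) a^t = 1$, i.e.\ $a^t a = (QgQ)^{-1}$, since the inner product of the vertical coordinate fields $\pa/\pa\si_k$ is exactly $(QgQ)_{kl}$ by Lemma \ref{l:indepchart}'s surrounding setup. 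By definition of the trace $\Tr$, we have
\[
k(Z_H) = \sum_{j>\dim(B)} S(e_j, e_j, Z_H),
\]
and then I would substitute Proposition \ref{prop:expression-tensor}, which gives $2 S(X,Y,Z) = Z(\inn{X,Y}_M) - \inn{[Z,X],Y}_M - \inn{[Z,Y],X}_M$ for vertical $X,Y$ and horizontal $Z$. With $X=Y=e_j$ the first term vanishes since $\inn{e_j,e_j}_M = 1$, leaving
\[
k(Z_H) = - \sum_{j>\dim(B)} \inn{[Z_H, e_j], e_j}_M .
\]

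\textbf{Reduction to coordinate fields.} The next step is to convert this frame-dependent expression into the coordinate expression in the statement. I would expand $[Z_H,e_j]$ using $e_j = \sum_k a_{jk}\,\pa/\pa\si_k$ and the Leibniz rule for the Lie bracket, $[Z_H, a_{jk}\,\pa/\pa\si_k] = Z_H(a_{jk})\,\pa/\pa\si_k + a_{jk}[Z_H,\pa/\pa\si_k]$. Pairing against $e_j = \sum_l a_{jl}\,\pa/\pa\si_l$ and summing over $j$, the term $\sum_j Z_H(a_{jk}) a_{jl} \inn{\pa/\pa\si_k,\pa/\pa\si_l}_M = \sum_j Z_H(a_{jk}) a_{jl}(QgQ)_{kl}$; since $\sum_j a_{jk} a_{jl} = (QgQ)^{-1}_{kl}$, applying $Z_H$ and contracting with $(QgQ)$ produces (up to sign and the standard det/log identity) precisely $-\frac12 \Tr\big(Z_H(QgQ)\cdot(QgQ)^{-1}\big) = -Z_H(\det^{1/2}(QgQ))\cdot\det^{-1/2}(QgQ)$, which accounts for the first term in the claimed formula. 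The remaining piece is $\sum_{j,k,l} a_{jk} a_{jl} \inn{[Z_H,\pa/\pa\si_k],\pa/\pa\si_l}_M = \sum_{k,l}(QgQ)^{-1}_{kl}\inn{[Z_H,\pa/\pa\si_k],\pa/\pa\si_l}_M$; here I would use Lemma \ref{lma:hor-vert} (so $[Z_H,\pa/\pa\si_k]$, being a bracket of a horizontal lift with a vertical field, is vertical — wait, $\pa/\pa\si_k$ for $k>\dim(B)$ is vertical in the fibration chart) to identify $\inn{[Z_H,\pa/\pa\si_k],\pa/\pa\si_l}_M$ with the vertical metric, and the fact that in the fibration chart $Z_H = \sum_i d\si_i(Z_H)\,\pa/\pa\si_i$ with $d\si_i(Z_H)$ depending only on the base coordinates for $i\le\dim(B)$, so that $[Z_H,\pa/\pa\si_k] = -\sum_i \pa/\pa\si_k(d\si_i(Z_H))\,\pa/\pa\si_i$. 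Contracting and simplifying the resulting double sum against $(QgQ)^{-1}$, combined once more with the det/log relation, should collapse the whole expression to $\sum_{i>\dim(B)} \pa/\pa\si_i(d\si_i(Z_H))$, the second term in the statement.

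\textbf{Main obstacle.} I expect the bookkeeping around which indices are ``vertical'' (i.e.\ $>\dim(B)$) versus ``horizontal'' to be the delicate point: $Z_H$ has components in all directions, but only its vertical components $d\si_i(Z_H)$ for $i>\dim(B)$ enter the final formula, and one must carefully check that the horizontal components either cancel or recombine into the determinant term via Lemma \ref{l:indepchart} ($\det(g) = (\det(h)\circ\pi)\cdot\det(QgQ)$). The cleanest route is probably to first rewrite $k(Z_H) = -\sum_j \inn{[Z_H,e_j],e_j}_M$ as the divergence of the vertical vector field obtained by contracting $Z_H$ with the vertical projection — i.e.\ relate it to $\sum_{l>\dim(B)} \pa/\pa\si_l\big(\text{(vertical part of }Z_H)_l \cdot \det^{1/2}(QgQ)\big)\cdot\det^{-1/2}(QgQ)$ — and then expand the product by Leibniz; but since the statement is explicitly local and all identities invoked (Proposition \ref{prop:expression-tensor}, Lemma \ref{lma:hor-vert}, Lemma \ref{l:indepchart}, the det/log relation) are already available, no genuinely new idea is needed beyond patient index-tracking.
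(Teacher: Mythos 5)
Your proof is correct and follows essentially the same route as the paper: both unpack $k(Z_H)=(\Tr\otimes 1)(S)(Z_H)$, pass to the vertical coordinate fields $\pa/\pa\si_k$ (you via an orthonormal frame $e_j=\sum_k a_{jk}\pa/\pa\si_k$, the paper via the direct contraction $\sum_{i,j>\dim(B)}(QgQ)^{-1}_{ij}\inn{\pa/\pa\si_j,S(Z_H)(\pa/\pa\si_i)}_M$), arrive at $\tfrac12\Tr\big((QgQ)^{-1}Z_H(QgQ)\big)+\sum_{i>\dim(B)}\pa/\pa\si_i(d\si_i(Z_H))$, and close with the det/log identity. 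One small flag: the parenthetical ``$-\tfrac12\Tr(Z_H(QgQ)\cdot(QgQ)^{-1})=-Z_H(\det^{1/2})\cdot\det^{-1/2}$'' has the wrong sign --- applying $Z_H$ to $\sum_j a_{jk}a_{jl}=(QgQ)^{-1}_{kl}$ and using $Z_H((QgQ)^{-1})=-(QgQ)^{-1}Z_H(QgQ)(QgQ)^{-1}$ gives the \emph{positive} term $+\tfrac12\Tr((QgQ)^{-1}Z_H(QgQ))$, which is what the statement requires; also, the ``divergence'' reformulation you float in the last paragraph would replace $Z_H(\det^{1/2}(QgQ))$ by only its vertical derivatives, so it is not literally equivalent to the stated formula and should not be taken as a shortcut.
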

\begin{proof}
Without loss of generality, we assume that $Z \in \sX(B)$ is real. Recall that
\[
k(Z_H) = \Tex{Tr}(S (Z_H) ) \, , 
\]
where $S(Z_H) \in \Tex{End}_{C^\infty(M)}( \sX(M/B))$ is given by
\[
\inn{ X, S(Z_H)(Y) }_M = \frac{1}{2} \big( Z_H( \inn{X,Y}_M) - \inn{ [Z_H,X],Y}_M - \inn{ X, [Z_H,Y]}_M \big) \, .
\]
We have the local formula
\[
\begin{split}
k(Z_H)
& = \sum_{i > \dim(B)}^{\Tex{dim}(M)} \binn{ (d \si_i \ci P)^\sharp, S(Z_H)(\pa/\pa \si_i) }_M \\
& = \sum_{i,j > \dim(B)}^{\Tex{dim}(M)} \big( (Q g Q)^{-1} \big)_{ij} \cd \binn{ \pa/\pa \si_j, S(Z_H)(\pa/\pa \si_i) }_M \, .
\end{split}
\]
Using the explicit formula for $S(Z_H)$ a direct computation then shows that
\[
k(Z_H) = \frac{1}{2} \Tex{Tr}\big( (Q g Q)^{-1} Z_H( Qg Q ) \big)
+ \sum_{i > \dim(B)}^{\Tex{dim}(M)} \pa/\pa \si_i ( d \si_i (Z_H)) \, .
\]
%
The result of the lemma thus follows from the det/log relationship:
\[
Z_H( \Tex{det}^{1/2}( Qg Q)) = \frac{1}{2}\Tex{Tr}( (Q g Q)^{-1} Z_H( Q g Q) ) \cd \Tex{det}^{1/2}(Q g Q). \qedhere
\]
\end{proof}

\begin{prop}\label{p:metconX}
Let $Z \in \sX(B)$, $\xi, \eta \in \cc X$ and $f \in C^\infty(B)$. We have the identities
\[
\Na^{\cc X}_{Z \cd f}(\xi) = \Na^{\cc X}_Z(\xi) \cd f \q 
\Na^{\cc X}_Z(\xi \cd f) = \Na^{\cc X}_Z(\xi) \cd f + \xi \cd Z(f)
\]
as well as the identity
\[
\inn{ \Na^{\cc X}_Z( \xi),\eta } + \inn{\xi,\Na^{\cc X}_Z(\eta)} = Z( \inn{\xi,\eta})
\]
when $Z$ is real.
\end{prop}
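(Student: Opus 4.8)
The plan is to verify the three stated identities directly from the defining formula
\[
\nabla_Z^{\cc X}(\xi) = \nabla_{Z_H}^{\sE_V}(\xi) + \tfrac{1}{2} k(Z_H)\cdot \xi,
\]
reducing each claim to a property of $\nabla^{\sE_V}$ (established in Proposition~\ref{prop:spin-connections}) together with the behavior of horizontal lifts and of the mean curvature $k$. For the $C^\infty(B)$-linearity in $Z$, the key observation is that the horizontal lift is $C^\infty(M)$-linear after pullback, i.e. $(Z\cd f)_H = Z_H \cd (f\ci\pi)$, so that $\nabla^{\sE_V}_{(Z\cd f)_H}(\xi) = (f\ci\pi)\cd\nabla^{\sE_V}_{Z_H}(\xi)$ and $k((Z\cd f)_H) = (f\ci\pi)\cd k(Z_H)$; since multiplication by $f\in C^\infty(B)$ on $\cc X$ is exactly multiplication by $f\ci\pi$ on $\sE_V$, the two sides agree. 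For the Leibniz rule in the $\cc X$-variable, one uses that $\nabla^{\sE_V}$ is a connection, so $\nabla^{\sE_V}_{Z_H}(\xi\cd(f\ci\pi)) = \nabla^{\sE_V}_{Z_H}(\xi)\cd(f\ci\pi) + \xi\cd Z_H(f\ci\pi)$, and then the identity $Z_H(f\ci\pi) = Z(f)\ci\pi$ (which is just the definition of horizontal lift, cf. the proof of Lemma~\ref{lma:hor-vert}) converts this into the stated formula; the mean-curvature term is $C^\infty(M)$-linear and contributes nothing new.

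The substantive point is the compatibility with the inner product. Here I would fix $b\in B$ and compute $Z(\inn{\xi,\eta}_{\cc X})$ at $b$ by differentiating $\inn{\xi,\eta}_{\cc X}(b') = \int_{F_{b'}}\inn{\xi,\eta}_{\sE_V}\,d\mu_{F_{b'}}$, working in a fibration chart $(U,\si)$ as in Lemma~\ref{l:indepchart}. Writing the fiber integral against Lebesgue measure with density $\Tex{det}(QgQ)^{1/2}$, the derivative $Z$ at $b$ lifts to $Z_H$ and acts both on the integrand $\inn{\xi,\eta}_{\sE_V}$ and on the density; since $\nabla^{\sE_V}$ is a \emph{metric} Clifford connection we have $Z_H(\inn{\xi,\eta}_{\sE_V}) = \inn{\nabla^{\sE_V}_{Z_H}\xi,\eta}_{\sE_V} + \inn{\xi,\nabla^{\sE_V}_{Z_H}\eta}_{\sE_V}$, and the derivative of the density produces exactly a factor $Z_H(\Tex{det}^{1/2}(QgQ))\cd\Tex{det}^{-1/2}(QgQ)$. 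The local formula for $k$ in Lemma~\ref{l:locmean} asserts that this density term differs from $k(Z_H)$ by a total vertical divergence $\sum_{i>\dim(B)}\pa/\pa\si_i(d\si_i(Z_H))$, which integrates to zero over the closed fiber $F_b$ (after incorporating the density correctly — this is the same det/log bookkeeping used in Lemma~\ref{l:symmetry}). Collecting the surviving terms yields $Z(\inn{\xi,\eta}_{\cc X})(b) = \inn{\nabla^{\cc X}_Z\xi,\eta}_{\cc X}(b) + \inn{\xi,\nabla^{\cc X}_Z\eta}_{\cc X}(b)$, with the two $\tfrac{1}{2}k(Z_H)$ terms combining to supply precisely the piece that was lost to the divergence.

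The main obstacle is the bookkeeping in this last step: one must correctly track how $Z_H$ interacts with the fiberwise volume density when passing the derivative under the integral sign, and verify that the "extra" divergence term in Lemma~\ref{l:locmean} is genuinely the integral of a total derivative along the fiber (so that compactness of $F_b$ kills it). This is exactly parallel to the argument in Lemma~\ref{l:symmetry}, where the analogous divergence term was shown to be absorbed by the connection trace via Koszul's formula; I would reuse that computation almost verbatim, the only difference being that here the vector field is the fixed horizontal lift $Z_H$ rather than the coordinate fields $\pa/\pa\si_l$. A minor additional check is that $\nabla^{\cc X}$ indeed lands in $X\hot_{C(B)}\Om^1_{\Tex{cont}}(B)$, i.e. that $Z\mapsto\nabla^{\cc X}_Z(\xi)$ is $C^\infty(B)$-linear and extends continuously — but this is immediate from the first identity of the proposition once it is established.
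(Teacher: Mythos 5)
Your treatment of the first two identities and your overall architecture for the third --- fibration chart, metric property of $\nabla^{\sE_V}$ from Proposition \ref{prop:spin-connections}, the local formula for $k(Z_H)$ from Lemma \ref{l:locmean}, and the vanishing of a fiberwise total divergence --- match the paper's proof. There is, however, a real imprecision at the heart of your argument for the compatibility with the inner product.

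You assert that when differentiating the fiber integral, ``the derivative $Z$ at $b$ lifts to $Z_H$ and acts both on the integrand and on the density.'' This is not what happens. Writing $Z = \pa/\pa\phi_i$, differentiating the local expression $\int_{\psi(W)} h\ci\si^{-1}(\phi(\cd),y)\,dy$ with $h := \inn{\xi,\eta}_{\sE_V}\cd\det^{1/2}(QgQ)$ produces the \emph{coordinate} field $\pa/\pa\si_i$ acting under the integral sign, not $Z_H$; in a fibration chart $Z_H = \pa/\pa\si_i + \sum_{k > \dim(B)} d\si_k(Z_H)\,\pa/\pa\si_k$, and the vertical correction does not vanish in general. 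Relatedly, your claim that the extra term $\sum_{k>\dim(B)}\pa/\pa\si_k(d\si_k(Z_H))$ from Lemma \ref{l:locmean} ``integrates to zero over the closed fiber'' is not correct as stated once it is multiplied by the non-constant density $h$. What actually kills the unwanted pieces is the identity
\[
Z_H(h) + \sum_{k>\dim(B)}\pa/\pa\si_k\big(d\si_k(Z_H)\big)\cd h
= \sum_{k>\dim(B)}\pa/\pa\si_k\big(d\si_k(Z_H)\cd h\big) + \pa/\pa\si_i(h),
\]
which follows by Leibniz from the decomposition of $Z_H$ above. The first summand on the right is a genuine total vertical divergence (and so integrates to zero on the compact fiber), while the second integrates back to $\pa/\pa\phi_i(\inn{\xi,\eta}_X) = Z(\inn{\xi,\eta}_X)$. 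This reconciliation between $Z_H$ and $\pa/\pa\si_i$ is the crux of the paper's computation and is absent from your proposal; your appeal to ``the same det/log bookkeeping used in Lemma \ref{l:symmetry}'' is also slightly off, since there the vector fields involved are purely vertical and this particular horizontal/coordinate discrepancy does not arise. Once you insert the displayed identity, your argument closes and coincides with the paper's.
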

\begin{proof}
The first two identities can be verified by a straightforward computation, so we focus on the third identity. 
%

Without loss of generality we assume that $\Tex{supp}(\xi) \su U$ where $(U, \si)$ is a fibration chart of the form $U = \rho^{-1}(V \ti W)$ and $\si = (\phi \ti \psi)\ci \rho : U \to \rr^{\Tex{dim}(B) + \Tex{dim}(F)}$. We may also assume that $Z = \pa/\pa \phi_i$ for some $i \in \{1,\ldots,\dim(B)\}$. Using that $\Na^{\sE_V}$ is metric, by Proposition \ref{prop:spin-connections}, we then have that
\[
\begin{split}
& \inn{\Na^{\cc X}_Z(\xi),\eta}_X + \inn{\xi,\Na^{\cc X}_Z(\eta)}_X \\
& \q = \int_{\psi(W)} \Big( \big( Z_H (\inn{ \xi,\eta}_{\sE_V}) + \inn{\xi,\eta}_{\sE_V} \cdot k(Z_H) \big) \cdot \Tex{det}^{1/2}( Qg Q) \Big) \\
& \qqq \ci \si^{-1}( \phi(\cd),y) \, dy \, . 
\end{split}
\]
To ease the notation, we put $h := \inn{\xi,\eta}_{\sE_V} \cdot \Tex{det}^{1/2}( Qg Q) : M \to \C$. Using Lemma \ref{l:locmean} we compute the integrand in the above expression:
\[
\begin{split}
& \big( Z_H (\inn{ \xi,\eta}_{\sE_V}) + \inn{\xi,\eta}_{\sE_V} \cdot k(Z_H) \big) \cdot \Tex{det}^{1/2}( Qg Q) \\
& \q = Z_H( h ) + \sum_{k > \dim(B)}^{\dim(M)} \pa/\pa \si_k\big( d \si_k( Z_H) \big) \cd h \\
& \q = \sum_{k > \dim(B)}^{\Tex{dim}(M)} \pa/\pa \si_k\big( d \si_k( Z_H) \cd h \big)
+ \pa/\pa \si_i( h ) \, .
\end{split}
\]
Combining these computations we thus obtain that
\[
\begin{split}
& \inn{\Na^{\cc X}_{\pa/\pa \phi_i}(\xi),\eta}_X + \inn{\xi,\Na^{\cc X}_{\pa/\pa \phi_i}(\eta)}_X \\
& \q = \int_{\psi(W)} \pa/\pa \si_i (h ) \ci \si^{-1}( \phi(\cd),y) \, dy = \pa/\pa \phi_i ( \inn{ \xi,\eta}_X ) \, .
\end{split}
\]
This ends the proof of the proposition.
\end{proof}

We are now ready to define our lift of the Dirac operator $D_B : \dom(D_B) \to L^2(\sE_B)$:

\begin{lma}
The following local expression defines an odd symmetric unbounded operator on the image of $\cc X \otimes_{C^\infty(B)} \sE_B$ in $X \hot_{C(B)} L^2(\sE_B)$:
\[
(1 \otimes_\nabla D_B) (\xi \otimes r) := \xi \otimes D_B r + i \sum_\alpha \nabla^{\cc X}_{f_\alpha} (\xi)  \otimes c(f_\alpha) r  .
\]
\end{lma}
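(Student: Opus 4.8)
The plan is to verify two things: that the stated local expression is independent of the choice of local orthonormal frame $\{f_\alpha\}$ for $\sX(B)$ (so that it actually defines a global operator on the image of $\cc X \otimes_{C^\infty(B)} \sE_B$), and that the resulting operator is odd and symmetric. For well-definedness over $C^\infty(B)$, the essential point is that $\xi \otimes r$ lives in a \emph{balanced} tensor product over $C^\infty(B)$, so I must check that $(1 \otimes_\nabla D_B)(\xi \cdot f \otimes r) = (1 \otimes_\nabla D_B)(\xi \otimes f \cdot r)$ for $f \in C^\infty(B)$; this is where the Leibniz rule for $\nabla^{\cc X}$ from Proposition \ref{p:metconX} enters, since the extra term $i\sum_\alpha \xi \cdot f_\alpha(f) \otimes c(f_\alpha) r$ produced by differentiating $f$ through $\nabla^{\cc X}$ is exactly compensated by the term coming from $D_B(f\cdot r) = f \cdot D_B r + i c(df^\sharp) r$ (using that $D_B$ is a Dirac-type operator and $\sum_\alpha f_\alpha(f) c(f_\alpha) = c((df)^\sharp)$). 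Frame independence follows similarly: a change of orthonormal frame is implemented by an $\mathrm{SO}$-valued function on $B$, and since $\nabla^{\cc X}$ is a connection and $c$ is linear, $\sum_\alpha \nabla^{\cc X}_{f_\alpha}(\xi) \otimes c(f_\alpha) r$ transforms as a genuine tensor.

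Next I would address oddness: the operator $\xi \otimes r \mapsto \xi \otimes D_B r$ is odd because $D_B$ anticommutes with $\ga_B$, and $\xi \mapsto \nabla^{\cc X}_{f_\alpha}(\xi)$ is even (connections preserve the grading, as $\nabla^{\sE_V}$ does by Proposition \ref{prop:spin-connections} and adding the scalar $\frac12 k(Z_H)$ does not change parity) while $r \mapsto c(f_\alpha) r$ is odd, so each summand of the correction term is odd as well; hence the total operator is odd with respect to $\ga_X \otimes \ga_B$.

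The symmetry claim is the substantive part. I would compute $\langle (1\otimes_\nabla D_B)(\xi_1 \otimes r_1), \xi_2 \otimes r_2 \rangle - \langle \xi_1 \otimes r_1, (1\otimes_\nabla D_B)(\xi_2 \otimes r_2)\rangle$ using the formula for the inner product on the interior tensor product, namely $\langle \xi_1 \otimes r_1, \xi_2 \otimes r_2 \rangle = \langle r_1, \langle \xi_1, \xi_2\rangle_X \cdot r_2\rangle_{L^2(\sE_B)}$. The $\xi \otimes D_B r$ part contributes the self-adjointness defect of $D_B$ twisted by the $C(B)$-valued inner product $\langle \xi_1,\xi_2\rangle_X$, which is not a scalar, so $D_B$ acting on it produces a derivative term $i\sum_\alpha c(f_\alpha)r$ paired against $f_\alpha(\langle\xi_1,\xi_2\rangle_X)$; this is precisely cancelled by the correction-term contributions once one invokes the compatibility identity $f_\alpha(\langle\xi_1,\xi_2\rangle_X) = \langle \nabla^{\cc X}_{f_\alpha}\xi_1, \xi_2\rangle_X + \langle \xi_1, \nabla^{\cc X}_{f_\alpha}\xi_2\rangle_X$ from Proposition \ref{p:metconX} together with the self-adjointness of each $c(f_\alpha)$ on $\sE_B$. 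The main obstacle will be bookkeeping the cross-terms correctly: one must carefully move the operators $c(f_\alpha)$ and the connection across the balanced tensor product, keeping track of the module-valued inner product and of signs from the $\zz/2\zz$-grading, and then recognize that the leftover terms assemble into exactly the metric-compatibility identity for $\nabla^{\cc X}$. This is the standard argument that a tensor-sum (or "localized") operator built from a symmetric operator and a metric connection is symmetric, specialized to the Dirac operator $D_B$ and the Hilbert $C^*$-module $X$; once the algebra is organized along these lines the identity drops out.
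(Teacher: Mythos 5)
Your argument is correct and takes essentially the same route as the paper's (much terser) proof: both rest on the metric compatibility of $\nabla^{\cc X}$ from Proposition~\ref{p:metconX} and the commutator identity $[D_B,h]=i\sum_\alpha c(f_\alpha)\,f_\alpha(h)$. The extra checks you spell out (well-definedness on the balanced tensor product over $C^\infty(B)$, frame independence, and oddness) are the details the paper leaves implicit, and they go through exactly as you describe.
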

\begin{proof}
The result follows since $\Na^{\cc X}$ is a metric connection, by Proposition \ref{p:metconX}, and since $[D_B,h] = i \sum_{\al = 1}^{\dim(B)} c(f_\al) f_\al(h)$, whenever $h : B \to \C$ has support inside the domain of a local orthonormal frame $\{f_\al\}_{\al = 1}^{\dim(B)}$ of vector fields on $B$.
\end{proof}

The tensor sum we are after is given by the odd symmetric unbounded operator
\[
(S \ti_{\Na} D_B)_0 := S \otimes \ga_B + 1 \otimes_\nabla D_B: 
\dom( S \ti_\Na D_B)_0 \to X \hot_{C(B)} L^2(\sE_B) \, ,
\]
where the domain is the image of $\cc X \otimes_{C^\infty(B)} \sE_B$ in $X \hot_{C(B)} L^2(\sE_B)$. The closure of the symmetric unbounded operator $(S \ti_{\Na} D_B)_0$ will be denoted by $S \times_\nabla D_B$.
%

Before we start comparing $S \times_\Na D_B$ with the Dirac operator $D_M$ we present a useful lemma on the various connections involved. We recall that $\om(X) \in \Om^2(M)$ was introduced in Definition \ref{defn:tensors} and that the maps $c : \Om^2(M) \to \Tex{End}_{C^\infty(M)}(\sE_M)$ and $\wit c : \Om^2(M) \to \Om^1(M) \ot_{C^\infty(M)} \Tex{End}_{C^\infty(M)}(\sE_M)$ are defined in \eqref{eq:clifftwo}.

\begin{lma}\label{l:commconn}
Let $T \in \Tex{End}_{C^\infty(M)}(\sE_H)$ be given. For all $X \in \sX(M)$ we then have the identities
\[
\begin{split}
& [ \Na^{\sE_M}_X, T] 
= [\Na_X^{\sE_H},T] + \frac{1}{4} [ T, c( \omega(X))]%
\end{split}
\]
of endomorphisms of $\sE_M$, where we are suppressing the identifications 
\[
\Tex{End}_{C^\infty(M)}(\sE_H) \cong \Tex{Cl}_H(M) \su \Tex{End}_{C^\infty(M)}( \sE_M) \, .
\]
\end{lma}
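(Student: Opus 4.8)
The identity to be proven, $[\Na^{\sE_M}_X, T] = [\Na^{\sE_H}_X, T] + \tfrac14[T, c(\omega(X))]$ for $T \in \Tex{End}_{C^\infty(M)}(\sE_H) \cong \Cl_H(M) \subseteq \End_{C^\infty(M)}(\sE_M)$, is $C^\infty(M)$-linear and (being a commutator difference) does not depend additively on the choice of $T$ within its algebra of definition, so the first reduction is to prove it for $T$ a generator of $\Cl_H(M)$, i.e.\ $T = c(Y)$ with $Y \in \sX_H(M)$ a (real) horizontal vector field. For such $T$ the left-hand side is computed directly from the Clifford-connection property of $\nabla^{\sE_M}$ in \eqref{eq:clicon}: $[\Na^{\sE_M}_X, c(Y)] = c(\Na^M_X(Y))$. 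Likewise, using that $\Na^{\sE_H}$ is a Clifford connection for the horizontal data, namely \eqref{eq:horclicon} together with $c_H(Y) = c(Y)$ under the identifications, the first term on the right is $[\Na^{\sE_H}_X, c(Y)] = c_H(\Na^H_X(Y)) = c(\Na^H_X(Y))$. Thus the claim reduces to the identity
\[
c(\Na^M_X(Y)) - c(\Na^H_X(Y)) = \frac14 [c(Y), c(\omega(X))]
\]
of endomorphisms of $\sE_M$.

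\textbf{Main step.} To finish, I would recognize the right-hand side via Lemma \ref{l:commcliff}, which gives $\tfrac14[c(Y), c(\omega(X))] = -\tfrac14[c(\omega(X)), c(Y)] = -\wit c(\omega(X))(Y)$, or more usefully I would directly apply Proposition \ref{p:omedir} (Bismut's formula) $\inn{\Na^M_X Y, Z}_M = \inn{\Na^\oplus_X Y, Z}_M + \omega(X)(Y,Z)$ for all $Z \in \sX(M)$. Since $Y$ is horizontal, $\Na^\oplus_X Y = \Na^H_X Y$, so $\Na^M_X(Y) - \Na^H_X(Y)$ is the vector field whose flat is $\omega(X)(Y,-)$, i.e.\ $\Na^M_X(Y) - \Na^H_X(Y) = (\iota_Y \omega(X))^\sharp$ where $\iota_Y$ contracts the $2$-form $\omega(X) \in \Om^2(M)$ with $Y$. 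Applying $c$ and using Lemma \ref{l:commcliff} (which precisely identifies $c$ of such a contraction with $\tfrac14[c(Y), c(\omega(X))]$ — this is the content of $\wit c(\omega)(Y) = \tfrac14[c(Y),c(\omega)]$ read off the definition \eqref{eq:clifftwo}) yields the desired equality. The only genuine point of care is bookkeeping around the musical isomorphisms and the sign/order conventions in $\wit c$ versus the commutator, which is exactly what Lemma \ref{l:commcliff} is designed to absorb.

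\textbf{Expected obstacle.} There is no serious analytic or structural difficulty here; the proposition is essentially a repackaging of Proposition \ref{p:omedir} and Lemma \ref{l:commcliff} after the reduction to Clifford generators. The one place demanding attention is the claim that it suffices to check the identity on generators $c(Y)$: one must observe that $T \mapsto [\Na^{\sE_M}_X, T] - [\Na^{\sE_H}_X, T] - \tfrac14[T, c(\omega(X))]$ is $\C$-linear in $T$ and that the difference $\Na^{\sE_M}_X - \Na^{\sE_H}_X$ acts as a derivation-compatible correction, so the identity propagates multiplicatively: if it holds for $T_1, T_2 \in \Cl_H(M)$ then, since $[D, T_1 T_2] = [D, T_1] T_2 + T_1 [D, T_2]$ for $D$ either connection and the curvature correction $\tfrac14[\,\cdot\,, c(\omega(X))]$ obeys the same Leibniz rule over $\Cl_H(M)$ only because $c(\omega(X))$ may fail to commute with $\Cl_H(M)$ — here one uses that the graded commutator with a fixed element is a (graded) derivation — it holds for $T_1 T_2$. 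Spelling out this Leibniz-rule compatibility for the graded commutator with $c(\omega(X))$, including keeping track of the $\zz/2\zz$-grading signs (note $c(\omega(X))$ is \emph{even}, so the signs are in fact trivial), is the only routine-but-non-vacuous verification.
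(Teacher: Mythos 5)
Your proposal is correct and follows essentially the same route as the paper's proof: reduce to a Clifford generator $T = c_H(Y_H)$, apply the Clifford-connection properties to convert commutators with $\nabla^{\sE_M}_X$ and $\nabla^{\sE_H}_X$ into $c(\nabla^M_X Y_H)$ and $c(\nabla^H_X Y_H)$, invoke Proposition \ref{p:omedir} to identify the difference as $c(\omega(X)(Y_H,\cdot)^\sharp)$, and finish with Lemma \ref{l:commcliff}. (Minor note: your first parenthetical in the ``Main step'' introduces a stray sign, since Lemma \ref{l:commcliff} gives $\tfrac14[c(Y),c(\omega(X))] = \wit c(\omega(X))(Y)$ directly with no minus, but you discard that aside in favour of the correct computation via Proposition \ref{p:omedir}; and your clause ``only because $c(\omega(X))$ may fail to commute'' should read along the lines of ``even though,'' since the Leibniz rule for $[\,\cdot\,,c(\omega(X))]$ holds regardless.)
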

\begin{proof}
Without loss of generality, we may assume that $T = c_H(Y_H)$ for some vector field $Y \in \sX(B)$. The fact that $\nabla^{\sE_M}$ and $\Na^{\sE_H}$ are Clifford connections implies that we can use Proposition \ref{p:omedir} for the corresponding connections on vector fields. Indeed: 
\begin{align*}
&[ \Na^{\sE_M}_{X}, c_H(Y_H)] = c(\nabla^H_X(Y_H)) + c( \omega(X)(Y_H,\cdot)^\sharp )\\
&\qquad = [ \Na_X^{\sE_H}, c_H(Y_H) ] + \tilde c (\omega(X))(Y_H)\\
&\qquad = [\Na_X^{\sE_H}, c_H(Y_H) ] + \frac{1}{4} [c(Y_H), c(\omega(X))] \, ,
\end{align*}
where we also used Lemma \ref{l:commcliff} in passing to the last line. 
\end{proof}

We recall that the curvature of our Riemannian submersion is the element $\Om \in \Om^2(M) \ot_{C^\infty(M)} \Om^1(M)$ introduced in Definition \ref{defn:tensors}.
 
\begin{thm}
Under the unitary isomorphism $V: X \hot_{C(B)} L^2(\sE_B) \to L^2(\sE_M)$ we have the identity
\[
V ( S \times_\nabla D_B )V^* = D_M - \frac{i}{8} (c \ot c)(1 \ot \sharp)(\Omega).
\]
\end{thm}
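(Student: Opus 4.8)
The plan is to work locally on a fibration chart $(U,\si)$ and compare the two first-order differential operators $V(S\times_\nabla D_B)V^*$ and $D_M$ directly as operators on $\sE_M$, using that both are odd symmetric and that their difference will turn out to be a zeroth-order term. First I would compute $V(S\times_\nabla D_B)_0 V^*$ explicitly on a simple tensor. Picking a local orthonormal frame $\{f_\al\}_{\al=1}^{\dim(B)}$ for $\sX(B)$ and lifting it horizontally to $\{(f_\al)_H\}$, together with a local orthonormal frame $\{e_j\}_{j=1}^{\dim(F)}$ for $\sX_V(M)$, the combined frame $\{(f_\al)_H\}\cup\{e_j\}$ is a local orthonormal frame for $\sX(M)$. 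Under $V$ the Clifford actions $c_H$ and $c_V$ on the factors $\sE_H$ and $\sE_V$ assemble, via Proposition~\ref{prop:spinors-vert-hor} and the graded tensor product identification $\Cl_H(M)\hot_{C^\infty(M)}\Cl_V(M)\cong\Cl(M)$, into the Clifford action $c$ on $\sE_M$; one has to be careful with the grading signs $\ga_H^{\deg(y)}$ appearing there, which is exactly why the $S\ot\ga_B$ (rather than $S\ot1$) appears in the tensor sum.

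\textbf{Key steps.} (1) Translate the two summands of $(S\times_\nabla D_B)_0 = S\ot\ga_B + 1\ot_\nabla D_B$ through $V$. The vertical piece $V(S\ot\ga_B)V^*$ becomes $i\sum_j c(e_j)\nabla^{\sE_V}_{e_j}$ acting on the vertical tensor factor; the horizontal piece $V(1\ot_\nabla D_B)V^*$ becomes $i\sum_\al c((f_\al)_H)\big(\nabla^{\sE_H}_{(f_\al)_H} + \nabla^{\cc X}_{f_\al}\big)$-type terms, where the connection $\nabla^{\cc X}$ contributes both the dual horizontal connection $\nabla^{\sE_H^*}$, the vertical Clifford connection $\nabla^{\sE_V}$ on the other leg, the curvature correction $\tfrac14 c(\omega(X))$ from Proposition~\ref{prop:spin-connections}, and the mean curvature term $\tfrac12 k((f_\al)_H)$. (2) Write $D_M = i\sum_\al c((f_\al)_H)\nabla^{\sE_M}_{(f_\al)_H} + i\sum_j c(e_j)\nabla^{\sE_M}_{e_j}$ in the same frame. (3) Subtract. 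The vertical-vertical terms match on the nose since $\nabla^{\sE_V}$ and $\nabla^{\sE_M}$ agree on the relevant tensor factor modulo the $\tfrac14 c(\omega(e_j))$ correction (by Proposition~\ref{prop:spin-connections} and Lemma~\ref{l:commconn}), and in the horizontal summand the discrepancy between $\nabla^{\sE_M}_{(f_\al)_H}$ and the pulled-back/dual horizontal connections is again governed by $\omega$ via Lemma~\ref{l:commconn} together with Proposition~\ref{p:omedir}. (4) Collect all the curvature corrections: terms involving $c(\omega(X))$ for horizontal $X$, terms for vertical $X$, and the mean-curvature terms $k((f_\al)_H)$. By Lemma~\ref{lma:mean-curv} the combination $(c\ot c)(\sharp\ot1)(\omega) = -2c(k^\sharp) - \tfrac12(c\ot c)(1\ot\sharp)(\Omega)$ lets the mean-curvature pieces cancel exactly the $c(k^\sharp)$ contribution coming from the $\tfrac12 k(Z_H)$ summand in $\nabla^{\cc X}$, leaving precisely $-\tfrac{i}{8}(c\ot c)(1\ot\sharp)(\Omega)$.

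\textbf{Main obstacle.} The bookkeeping in step (4) is the crux: one must assemble, with correct numerical coefficients and Clifford signs, the several sources of zeroth-order terms --- the $\tfrac14 c(\omega(e_j))$ from the vertical Clifford connection, the analogous horizontal-direction contributions extracted from $\nabla^{\sE_M}_{(f_\al)_H}$ via Lemma~\ref{l:commconn}, the $\tfrac12 k((f_\al)_H)$ from the definition of $\nabla^{\cc X}$, and (subtly) the contributions arising because $\sum_j c(e_j)c(e_j)$-type contractions of $\omega$ in its \emph{second} slot turn $\wit c(\omega)$ back into $c(\omega)$ as in the proof of Lemma~\ref{lma:mean-curv} --- and then invoke Lemma~\ref{lma:mean-curv} to see that everything collapses to the single $\Omega$-term with coefficient $-\tfrac{i}{8}$. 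A secondary technical point is verifying that both closures have the same domain so that the identity of the essentially-selfadjoint operators on the common core $\cc X\ot_{C^\infty(B)}\sE_B$ (equivalently, $\sE_M$) upgrades to an identity of the selfadjoint closures; this follows because $D_M$ differs from $V(S\times_\nabla D_B)_0V^*$ by a \emph{bounded} (zeroth-order) term, so the two symmetric operators have identical graph norms and hence identical closures.
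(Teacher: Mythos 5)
Your proposal follows essentially the same route as the paper's own proof: translate $(S\times_\nabla D_B)_0$ through $V$ on the common core, use Proposition~\ref{prop:spin-connections} and Lemma~\ref{l:commconn} to express all the connection terms via $\nabla^{\sE_M}$ plus $\tfrac14 c(\omega(\cdot))$ corrections, recognize the $\nabla^{\sE_M}$ pieces as $D_M$ in the combined frame, and then invoke Lemma~\ref{lma:mean-curv} to cancel the mean-curvature contribution from $\nabla^{\cc X}$ against the $c(k^\sharp)$ part of the $\omega$-correction, leaving $-\tfrac{i}{8}(c\ot c)(1\ot\sharp)(\Omega)$. Your closing remark about upgrading the core identity to an identity of closures via the bounded perturbation argument is sound and in fact makes explicit a point the paper passes over quickly.
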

\proof
Let us fix an element $\xi \otimes r \in \cc X \otimes_{C^\infty(B)} \sE_B$ in the core of $S \ti_{\Na} D_B$, with $\xi \in \cc X$ of the form $\xi = \phi \ot s$. Thus, $\phi \in \sE_H^*$ and $s \in \sE_M$. We will show that 
\[
V (S \times_\nabla D_B)(\xi \ot r) = \big( D_M - \frac{i}{8} (c \ot c)(1 \ot \sharp)(\Omega) \big)V(\xi \ot r).
\]
Since $V$ descends to an isomorphism of the core $\cc X \otimes_{C^\infty(B)} \sE_B$ for $S \ti_{\Na} D_B$ with the core $\sE_M$ for $D_M$ the above identity will prove the result of the theorem. Without loss of generality, we may assume that $\Tex{supp}(\xi) \subset U$ and $\Tex{supp}(r) \su \pi(U)$ for some open set $U \su M$ admitting real local orthonormal frames $\{ e_j\}_{j = 1}^{\dim (F)}$ and $\{ f_\al\}_{\al = 1}^{\dim (B)}$ defined on $U$ and $\pi(U)$, resp. 

We compute the vertical and the horizontal part of $V ( S \times_\nabla D_B)( \xi \ot r)$ separately. To ease the notation, we put
\[
T := \ket{r \ot 1}\phi \in \Tex{End}_{C^\infty(M)}(\sE_H)
\]
and notice that $V(\xi \ot r) = T(s)$.

We first remark that it follows by Lemma \ref{l:commconn} that
\begin{align}
\label{eq:vert}
& V( \Na_X^{\sE_V}(\xi) \ot r) \\\nonumber 
& \qquad 
 = \big( \ket{r \ot 1}\Na_X^{\sE_H^*}(\phi) \big)(s)
+ T \Na_X^{\sE_M}(s) + \frac{1}{4} T c( \om(X))(s) \\\nonumber 
& \qquad = \Na_X^{\sE_M} T(s) - \big( \ket{ \Na_X^{\sE_H}(r \ot 1)} \phi\big) (s)
+ \frac{1}{4} c(\om(X)) T(s) 
\end{align}
for an arbitrary real vector field $X \in \sX(M)$.

Using this observation we compute the vertical part:
\[
\begin{split}
& V (S \otimes \ga_B)(\xi \ot r)
= i \sum_{j = 1}^{\dim (F)} V \big(  c_V(e_j) \nabla^{\sE_V}_{e_j}(\xi) \otimes \ga_B(r) \big) \\
& \q = i \sum_{j = 1}^{\dim (F)} c(e_j) V \big( \Na^{\sE_V}_{e_j}(\xi) \ot r \big) \\
& \q = i \sum_{j = 1}^{\dim (F)} c(e_j) \big( \Na^{\sE_M}_{e_j} + \frac{1}{4} c(\om(e_j)) \big) V(\xi \ot r) \, . 
\end{split}
\]
%

Using the observation in \eqref{eq:vert} one more time, we compute the horizontal part: 
\[
\begin{split}
& V(1 \ot_{\Na} D_B)(\xi \ot r) \\
& \q = i \sum_{\al = 1}^{\dim (B)} c( ( f_\al)_H) V\bigg( \xi \ot \Na_{f_\al}^{\sE_B}(r) 
+ \Na^{\sE_V}_{ (f_\al)_H}(\xi) \ot r \\
& \qqq + \frac{1}{2} k( (f_\al)_H) \cd (\xi \ot r) \bigg) \\
& \q = i \sum_{\al = 1}^{\dim (B)} c( (f_\al)_H) \big( \Na^{\sE_M}_{ (f_\al)_H} + \frac{1}{4} c( \om( (f_\al)_H)) \big)V (\xi \ot r) \\
& \qqq + \frac{i}{2} c(k^\sharp) V (\xi \ot r) \, .
\end{split}
\]

Applying Lemma \ref{lma:mean-curv} and the above computations, we see that the sum of the vertical and the horizontal part is given by
\[
\begin{split}
& \big( V (S \otimes \ga_B) + V(1 \ot_{\Na} D_B) \big) (\xi \ot r) \\
& \q = D_M V (\xi \ot r) + \frac{i}{2} c(k^\sharp)T (s) + \frac{i}{4}(c \ot c)(\sharp \ot 1)(\om) V(\xi \ot r) \\
& \q = D_M V (\xi \ot r) - \frac{i}{8}(c \ot c)(1 \ot \sharp)(\Om) V (\xi \ot r) .\qedhere
\end{split}
\]
%

We summarize the above results in the following: 
\begin{thm}
\label{thm:fact}
Suppose that $M \to B$ is a Riemannian submersion of even-dimensional spin$^c$-manifolds. Then the even spectral triple $(C^\infty(M), L^2(\sE_M),D_M)$ is the unbounded Kasparov product of the even unbounded Kasparov module $(C^\infty(M), X,S)$  with the even spectral triple $(C^\infty(B),L^2(\sE_B), D_B)$ up to the curvature term $-\frac{i}{8} (c \ot c)(1 \ot \sharp)(\Omega)$.
\end{thm}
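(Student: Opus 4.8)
The plan is to deduce Theorem~\ref{thm:fact} from the operator identity just established together with Kucerovsky's criterion \cite{Kuc97} for recognising unbounded representatives of the interior Kasparov product. Set $R:=\tfrac{i}{8}(c\ot c)(1\ot\sharp)(\Omega)$. Since $R$ is Clifford multiplication by a smooth $\Om^2(M)$-valued one-form on the compact manifold $M$, it is a \emph{bounded}, self-adjoint, odd and $C(M)$-linear operator on $L^2(\sE_M)$; hence $D_M-R$ is self-adjoint with compact resolvent and $(C^\infty(M),L^2(\sE_M),D_M-R)$ is an even spectral triple over $C(M)$ representing the same class in $KK(C(M),\C)$ as $D_M$. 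By the preceding theorem $V(S\times_\nabla D_B)V^*=D_M-R$, so $S\times_\nabla D_B$ is self-adjoint with compact resolvent and $(C^\infty(M),X\hot_{C(B)}L^2(\sE_B),S\times_\nabla D_B)$ is an even unbounded Kasparov module; it remains to show that it represents the unbounded Kasparov product of $(C^\infty(M),X,S)$ (Proposition~\ref{p:unboukas}) with $(C^\infty(B),L^2(\sE_B),D_B)$.

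For this I would verify the three hypotheses of Kucerovsky's theorem for the candidate $D:=S\times_\nabla D_B$, using $\cc X$ as the dense core in $X$. \emph{(Connection condition.)} For $\xi\in\cc X$ let $T_\xi\colon L^2(\sE_B)\to X\hot_{C(B)}L^2(\sE_B)$ be the creation map $r\mapsto\xi\hot r$. Because $1\ot_\nabla D_B$ is built from the \emph{metric} connection $\nabla^{\cc X}$ (Proposition~\ref{p:metconX}) and $[D_B,h]=i\sum_\al c(f_\al)f_\al(h)$, the graded commutator of $D\oplus D_B$ with $\left(\begin{smallmatrix}0&T_\xi^*\\T_\xi&0\end{smallmatrix}\right)$ reduces, after the $\xi\ot D_B r$ contributions cancel, to a sum of bounded adjointable operators built from $S\xi\in X$ and from $\nabla^{\cc X}_{f_\al}\xi\in\cc X$ together with $c(f_\al)$; hence it is bounded. \emph{(Domain condition.)} From the preceding theorem $\dom(D)=V^*\dom(D_M-R)=V^*\dom(D_M)$ since $R$ is bounded, while the computation in the proof of that theorem exhibits $V(S\ot\ga_B)V^*$ on its core as the first-order differential operator $i\sum_j c(e_j)\bigl(\nabla^{\sE_M}_{e_j}+\tfrac14 c(\omega(e_j))\bigr)$ with smooth coefficients on the closed manifold $M$, whose graph norm is dominated by that of $D_M$ and whose domain therefore contains $\dom(D_M)$; pulling back by $V$ gives $\dom(D)\su\dom(S\ot\ga_B)$.

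\emph{(Positivity condition.)} This is the crux. Writing $A:=S\ot\ga_B$ and $B:=1\ot_\nabla D_B$, so $D=A+B$, one has for $\psi$ in the core
\[
\inn{A\psi,D\psi}+\inn{D\psi,A\psi}=2\|A\psi\|^2+\inn{\psi,(AB+BA)\psi},
\]
so it suffices to bound $\inn{\psi,(AB+BA)\psi}$ from below by $-2\|A\psi\|^2-C\|\psi\|^2$. Transporting through $V$ and putting $\tilde D_V:=VAV^*$, $\tilde D_H:=VBV^*$ (so $\tilde D_V+\tilde D_H=D_M-R$), the operator $\tilde D_V$ differentiates only in the vertical directions and $\tilde D_H$ only in the horizontal directions, and their principal symbols are Clifford multiplications by mutually orthogonal covectors, which anticommute; hence the would-be second-order part of $\tilde D_V\tilde D_H+\tilde D_H\tilde D_V$ cancels. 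The surviving part is first order and, by Lemma~\ref{lma:hor-vert}, differentiates only along the \emph{vertical} vector fields $[e_j,(f_\al)_H]$; a fibrewise elliptic (Bochner-type) estimate for $\tilde D_V$ then gives $\|\nabla^{\sE_M}_{e_j}\Phi\|\le C(\|\tilde D_V\Phi\|+\|\Phi\|)$ with $\Phi:=V\psi$, whence $|\inn{\Phi,(\tilde D_V\tilde D_H+\tilde D_H\tilde D_V)\Phi}|\le C'\|\Phi\|\bigl(\|\tilde D_V\Phi\|+\|\Phi\|\bigr)$, and Young's inequality yields the bound $-2\|\tilde D_V\Phi\|^2-C''\|\Phi\|^2$ as required.

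With the three conditions verified, Kucerovsky's theorem shows that $D=S\times_\nabla D_B$ represents the interior Kasparov product; pulling this back along the unitary $V$ and recalling $VDV^*=D_M-R$ gives the assertion. I expect the positivity condition to be the main obstacle: everything comes down to controlling the cross term $\tilde D_V\tilde D_H+\tilde D_H\tilde D_V$, and the argument rests on the interplay between the Clifford-algebraic orthogonality of the vertical and horizontal distributions---which removes the top-order part---and Lemma~\ref{lma:hor-vert} together with fibrewise elliptic estimates, which let the manifestly nonnegative term $2\|A\psi\|^2$ absorb what remains.
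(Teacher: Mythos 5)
Your proposal is correct in outline but takes a genuinely different route from the paper. The paper proves Theorem~\ref{thm:fact} by descending to \emph{bounded} $KK$-theory: it argues that the bounded transform of $(C^\infty(M),X,S)$ is, via its principal symbol (cf.\ Proposition~\ref{p:unboukas}), a continuous family of order-zero pseudodifferential operators on the fibers that coincides with the Connes--Skandalis representative of $\pi!$ from \cite[Proposition~2.9]{CS84}; likewise $D_M$ and $D_B$ bounded-transform to the fundamental classes $[M]$ and $[B]$. The theorem is then a direct consequence of the wrong-way functoriality $\pi!\hot_{C(B)}[B]=[M]$ established in \cite{CS84}, combined with the preceding operator identity $V(S\times_\nabla D_B)V^*=D_M-R$ and the boundedness of $R$, which ensures the bounded transforms of $S\times_\nabla D_B$ and $D_M$ define the same $KK$-class. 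You instead verify the three hypotheses of Kucerovsky's theorem \cite{Kuc97} directly for the tensor sum --- this is precisely the alternative route the authors mention in their introduction in connection with \cite{MR15}. Your approach is more self-contained (no appeal to the Connes--Skandalis pseudodifferential machinery and their wrong-way functoriality theorem) but demands substantially more analytic work: in particular, the positivity condition requires (i) the cancellation of the second-order part of the cross term $\tilde D_V\tilde D_H+\tilde D_H\tilde D_V$ via anticommutativity of vertical and horizontal Clifford symbols together with Lemma~\ref{lma:hor-vert}, and (ii) a fiberwise G{\aa}rding (Bochner) estimate for $\tilde D_V$ that must hold uniformly over the compact base --- this uniformity deserves an explicit argument (compactness of $B$ plus smoothness of the data) rather than being implicit. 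The paper's route buys brevity at the price of importing the bounded $KK$-theoretic input of \cite{CS84}; yours buys self-containedness and a stronger, genuinely \emph{unbounded} statement at the price of the Kucerovsky estimates, the positivity condition being (as you correctly anticipate) the main technical burden.
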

\begin{proof}
%
We will show that the bounded transforms of the above unbounded Kasparov modules coincide with the shriek maps defined in \cite{CS84}. The result then follows from the wrong-way functoriality of the shriek map. Specifically, we know that the factorization $[M] \hot_{C(B)} \pi! = [B]$ holds in $KK$-theory ({\it cf.} Equation \eqref{eq:fact}). 

Recall \cite[Proposition 2.9]{CS84} that for any $K$-oriented submersion $f:X \to Y$ the class $f!$ can be represented by a continuous family of pseudodifferential operators of order $0$ (parametrized by $y \in Y$) on the vertical spinor bundle. For each $y \in Y$, the principal symbol is given by $\sigma(x,\xi) =  c_V(\xi^\sharp)( 1 + \|\xi^\sharp\|^2)^{-1/2}$ for any element $\xi \in \big( \ker (df)_x\big)^* \cong \Om^1_x\big( f^{-1}( \{y\}) \big)$, where $x \in f^{-1}(\{y\})$. In particular, if $X$ is a spin$^c$ manifold and $Y$ consists of a single point, the bounded transform $D_X(1+D_X^2)^{- 1/2}$ of the Dirac operator is a representative of the shriek map, which in this case is the fundamental class $[X]$ in $K$-homology. Moreover, from the construction of the unbounded Kasparov module $(C^\infty(M), X,S)$ we have that $S(1+S^2)^{-1/2}$ is a continuous family of pseudodifferential operators of order zero with the correct principal symbols, see the proof of Proposition \ref{p:unboukas}. We thus conclude that the spectral triples $(C^\infty(M), L^2(\sE_M),D_M)$ and $(C^\infty(B), L^2(\sE_B),D_B)$ represent the respective fundamental classes $[M] \in KK_0(C(M),\C)$ and $[B] \in KK_0(C(B),\C)$, and that the unbounded Kasparov module $(C^\infty(M), X, S)$ represents the class $\pi!$ in $KK_0(C(M),C(B))$. 
\end{proof}

\begin{rem}
As a special case of Theorem \ref{thm:fact} we can consider the following situation. Let $G$ be a simply-connected compact semisimple Lie group and $H$ a closed subgroup and consider $\pi: G \to G/H$. The Killing form on the corresponding Lie algebra $\mathfrak{g}$ then induces a Riemannian metric on $G$ and on $G/H$. In fact, we can identify 
\[
T_e(G) \cong \mathfrak{g} \cong \mathfrak{h} \oplus \mathfrak{h}^\perp, \qquad \text{and} \qquad T_{0}(G/H) \cong  \mathfrak{h}^\perp
\]
in terms of the Lie algebra $\mathfrak{h}$ of $H$. It is clear that then $\pi$ is a Riemannian submersion. Moreover, under the above assumptions on $G$ both $G$ and $G/H$ are spin manifolds (for a construction of Dirac operators $D_G$ and $D_{G/H}$ on $G$ and $G/H$ we refer to \cite{Bar92}). Hence, Theorem \ref{thm:fact} applies and yields a factorization of $D_G$ as a tensor sum
\[
D_G = S \otimes 1 + 1 \otimes_\nabla D_{G/H},
\]
up to an explicit curvature-term. In particular, this applies to the Hopf fibration $\pi:\S^3 \to \S^2$ for which the explicit factorization was already obtained in \cite{BMS13}. In fact, one can compute that 
\[
\Omega( f_1, f_2,e_1) = 2
\]
in terms of the vertical $e_1$ and horizontal vector fields $f_1,f_2$. The curvature term appearing in Theorem \ref{thm:fact} above is then
$$
- \frac{i}{8} (c \ot c)(1 \ot \sharp)(\Omega) =-\frac i 4 [\gamma_2,\gamma_3]  \gamma_1= -\frac 12,
$$
where we have expressed $c(e_1), c(f_1)$ and $c(f_2)$ in terms of the Pauli matrices $\gamma_1,\gamma_2,\gamma_3$, with the convention used in \cite{BMS13}.
This is an independent check for the $-\frac{1}{2}$ appearing in \cite[Theorem 6.31]{BMS13}, at the same time giving meaning to it as a curvature term. More generally, such homogeneous spaces are subject of study in \cite{CM16}.
\end{rem}

\newcommand{\noopsort}[1]{}\def\cprime{$'$}

\end{document}